\documentclass[11pt, reqno]{amsart}

\usepackage{amsmath, amsthm, amscd, amsfonts, amssymb, graphicx, color}
\usepackage[bookmarksnumbered, colorlinks, plainpages]{hyperref}
\usepackage{dsfont,xcolor}
\usepackage{mathrsfs}
 \makeatletter
\let\reftagform@=\tagform@
\def\tagform@#1{\maketag@@@{(\ignorespaces\textcolor{blue}{#1}\unskip\@@italiccorr)}}
\renewcommand{\eqref}[1]{\textup{\reftagform@{\ref{#1}}}}
\makeatother
\hypersetup{colorlinks=true, linkcolor=red, anchorcolor=green,
citecolor=cyan, urlcolor=red, filecolor=magenta, pdftoolbar=true}

\newtheorem{definition}{Definition}[section]
\newtheorem{proposition}{Proposition}[section]
\newtheorem{theorem}{Theorem}[section]
\newtheorem{remark}{Remark}[section]
\newtheorem{lemma}{Lemma}[section]
\newtheorem{corollary}{Corollary}[section]
\newcommand{\Tr}{\mathrm{Tr}}           

\newcommand{\SR}{\mathcal{SR}}
\newcommand{\FSN}{\mathcal{FSN}}
\newcommand{\cone}{\mathrm{cone}}
\newcommand{\conv}{\mathrm{conv}}

\begin{document}

\title[Fractional    $k$-positive maps]{Fractional $k$-positivity: a continuous refinement of the $k$-positive scale}

\author{Mohsen Kian}
\address{$^1$Department of Mathematics, University of Bojnord, P. O. Box 1339, Bojnord 94531, Iran}
\email{kian@ub.ac.ir }

\subjclass[2020]{Primary 46L07, 47L07; Secondary 81P45, 15A60.}
\keywords{Fractional $k$-positive maps,  Schmidt number, $k$-superpositive, Choi matrix.}

\begin{abstract}
We introduce a real-parameter refinement of the classical integer hierarchies underlying Schmidt number, block-positivity, and $k$-positivity for maps between matrix algebras.
Starting from a compact family of $\alpha$-admissible unit vectors ($\alpha\in[1,d]$), we define closed cones $\mathsf K_\alpha$ of bipartite positive operators that interpolate strictly between successive Schmidt-number cones, together with their dual witness cones.  Via the Choi--Jamio\l{}kowski correspondence this yields a matching filtration of map cones $\mathsf P_\alpha$, recovering the usual $k$-positive/$k$-superpositive classes at integer parameters and complete positivity at the top endpoint.

Two results show that the fractional levels capture genuinely new structure.  First, we prove a \emph{fractional Kraus theorem}: $\alpha$-superpositive maps are precisely the completely positive maps admitting a Kraus decomposition whose Kraus operators satisfy an explicit singular-value (Ky--Fan) constraint, extending the classical rank-$k$ characterization.  Second, for non-integer $\alpha$ the cones $\mathsf P_\alpha$ fail stability under CP post-composition, highlighting a sharp structural transition away from the integer theory.  Finally, we derive sharp thresholds on canonical symmetric families (including the depolarizing ray and the isotropic slice), turning familiar stepwise criteria into continuous, computable profiles.
\end{abstract}

\maketitle

\section{Introduction}

Positive linear maps between matrix algebras and the convex cones they form are central objects
in operator theory, matrix analysis, and (via the Choi--Jamio{\l}kowski correspondence) quantum information theory.
On the one hand, positivity constraints such as $k$-positivity and complete positivity encode
noncommutative order structure and dilation phenomena; on the other, positive but not completely positive maps
serve as entanglement witnesses through the separability criteria of Peres--Horodecki and Horodecki \cite{Peres96,HHH96}.
A unifying perspective is to view these classes as closed convex cones in mapping spaces and to study
their duality, facial structure, and stability under natural operations (composition, amplification, local conjugations);
see, e.g., St{\o}rmer's work on mapping cones \cite{StormerMC,StormerBook} and more recent systematic treatments
of cones in mapping spaces \cite{SkowronekStormerZyczkowski,GiardKyeStormer}.

A classical integer hierarchy runs through much of the subject.  Fix $d=\min\{n,m\}$ and consider bipartite vectors
$\psi\in\mathds{C}^n\otimes\mathds{C}^m$ with Schmidt rank at most $k$.
Their rank-one projectors generate the cone of $k$-separable (or $k$-entangled) positive operators, and the dual cone
consists of $k$-block-positive operators (entanglement witnesses of Schmidt class $k$); see also \cite{KianEntCone}.
Via the Choi--Jamio{\l}kowski identification \cite{Jamiolkowski72,Choi75,WatrousTQI},
these operator cones correspond to the cones of $k$-superpositive maps (completely positive maps admitting Kraus operators
of rank $\le k$) and $k$-positive maps, respectively; see \cite{SkowronekStormerZyczkowski}.
At the level of states, the Schmidt number introduced by Terhal--Horodecki \cite{TerhalHorodeckiSN} yields a stepwise
classification: membership in the $k$-separable cone changes at discrete thresholds (e.g.\ on isotropic/Werner slices).

\medskip
\noindent
\textbf{Aim and main idea.}
The integer parameter $k$ produces genuine geometric and operational discontinuities:
many natural one-parameter families exhibit \emph{step} membership rules in $k$ with sharp breakpoints.
A guiding goal of this paper is to refine the integer hierarchy by introducing a \emph{global real parameter}
$\alpha\in[1,d]$ that interpolates between $k$ and $k+1$ in a controlled, intrinsically convex-geometric way.
Rather than modifying the ambient space, we refine the \emph{admissible rank-one generators}:
between $k$ and $k+1$ we allow one additional Schmidt coefficient, but constrain its size by an explicit ratio bound.
This produces a continuously varying family of compact generating sets and hence a nested family of closed cones.

\medskip
\noindent\textbf{Contributions and organization.}
Fix $n,m\in\mathbb N$ and set $d:=\min\{n,m\}$. We introduce a fractional interpolation between the integer
hierarchies (Schmidt number/$k$-block positivity and $k$-positivity) by testing positivity only on a family
$\mathcal V_\alpha$ of \emph{$\alpha$-admissible} unit vectors, $\alpha\in[1,d]$, and taking the resulting closed conic hull.
This yields nested cones of states $\{\mathsf K_\alpha\}_{\alpha\in[1,d]}$ and of maps $\{\mathsf P_\alpha\}_{\alpha\in[1,d]}$
that agree with the classical levels at integers and interpolate strictly between them.

While the cones $\mathsf K_\alpha$ and $\mathsf P_\alpha$ recover the classical integer hierarchies at $\alpha=k\in\{1,\dots,d\}$,
their behavior for non-integer $\alpha$ is not a formal convex interpolation.
In particular, away from integers the map cone $\mathsf P_\alpha$ fails a natural stability property under CP post-composition
(Proposition~\ref{prop:failure-cp-new}), which sharply distinguishes the fractional levels from the usual mapping-cone structures
appearing in the theory of $k$-positivity.

\smallskip
\noindent Our main results are:
\begin{itemize}
\item \emph{Geometry and interpolation of the fractional cones.}
We establish compactness of the generating sets and closedness/duality of the induced cones,
prove strict inclusions between consecutive integer levels, and show recovery of the classical integer hierarchies
(Propositions~\ref{prop:compact-lambda}, \ref{prop:strict-inclusions}, \ref{prop:integer-recovery}).

\item \emph{Fractional Kraus theorem.}
We characterize $\alpha$-superpositive maps as exactly those completely positive maps that admit a Kraus decomposition
with Kraus operators satisfying the same fractional singular-value constraint underlying $\mathcal V_\alpha$
(Theorem~\ref{thm:fractional-kraus}).

\item \emph{Sharp thresholds and closed-form profiles.}
We compute the exact $\alpha$-positivity threshold for the depolarizing family
$\Phi_t(X)=\Tr(X)I-tX$ (Theorem~\ref{thm:alpha-threshold-Phi-t}),
and derive a sharp fractional profile on the isotropic slice $\mathrm{span}\{I,P_\omega\}$
(Theorem~\ref{thm:flagship-slice}), including a closed-form inversion for the fractional Schmidt index of isotropic states
(Corollary~\ref{cor:FSN-rhoF}).
\end{itemize}

\smallskip
\noindent The paper is organized as follows.
Section~\ref{sec:preliminaries} fixes notation.
Section~\ref{sec:alpha-positivity} develops the fractional cones $\mathsf K_\alpha,\mathsf{BP}_\alpha$ and the induced map cones
$\mathsf P_\alpha,\mathsf{SP}_\alpha$, and proves their structural properties including the fractional Kraus theorem.
Section~\ref{sec:applications} contains the sharp depolarizing threshold and the isotropic-slice computations.

\section{Preliminaries}
\label{sec:preliminaries}

This section fixes notation and recalls the basic objects used throughout the paper.
We work with finite-dimensional matrix algebras, the vectorization/matricization correspondence, and the Choi
representation of linear maps. We also briefly recall Schmidt rank/Schmidt number terminology, since our fractional
constructions interpolate the familiar integer hierarchies.

For $n\in\mathbb{N}$, let $\mathbb{M}_n:=\mathbb{M}_n(\mathds{C})$ be the algebra of $n\times n$ complex matrices and $\mathbb{H}_n$ be the real vector space of Hermitian matrices.
The (closed, convex) cone of positive semidefinite matrices is denoted by $\mathbb{M}_n^{+}$.
 We use $\Tr(\cdot)$ for the usual trace and $\langle X,Y\rangle:=\Tr(X^\ast Y)$ for the Hilbert--Schmidt inner product on $\mathbb{M}_n$.

For $k,n\in\mathds{N}$ we identify $\mathbb{M}_k\otimes \mathbb{M}_n \cong \mathbb{M}_{kn}$ via the Kronecker product, and also with
block matrices $[X_{ij}]_{i,j=1}^k$ with blocks $X_{ij}\in \mathbb{M}_n$.

Let $\mathcal{H}_A\simeq \mathds{C}^n$ and $\mathcal{H}_B\simeq \mathds{C}^m$.
Every vector $\psi\in \mathcal{H}_A\otimes \mathcal{H}_B$ admits a \emph{Schmidt decomposition}
\[
\psi=\sum_{i=1}^{r} s_i\, x_i\otimes y_i,
\]
where $r\le \min\{n,m\}$, $\{x_i\}_{i=1}^r\subset \mathcal{H}_A$ and $\{y_i\}_{i=1}^r\subset \mathcal{H}_B$
are orthonormal families, and the \emph{Schmidt coefficients} satisfy
\[
s_1\ge s_2\ge \cdots \ge s_r>0.
\]
If $\|\psi\|=1$, then $\sum_{i=1}^r s_i^2=1$.

The \emph{Schmidt rank} of a nonzero vector $\psi$, denoted $\SR(\psi)$, is the number $r$ of nonzero
Schmidt coefficients in any Schmidt decomposition of $\psi$.

\smallskip
\noindent
Next we fix the vectorization convention that allows us to pass freely between bipartite vectors and rectangular matrices.

Fix orthonormal bases $\{e_i\}_{i=1}^n$ of $\mathds{C}^n$ and $\{f_j\}_{j=1}^m$ of $\mathds{C}^m$.
For $\psi\in\mathds{C}^n\otimes\mathds{C}^m$, write
\[
\psi=\sum_{i=1}^n\sum_{j=1}^m a_{ij}\,e_i\otimes f_j,
\]
and define its \emph{matricization} $\mathrm{mat}(\psi)\in \mathbb{M}_{n,m}$ by
\[
\mathrm{mat}(\psi):=[a_{ij}]_{i,j}.
\]
Then the correspondence $\psi\mapsto \mathrm{mat}(\psi)$ is a linear isometry between
$\mathds{C}^n\otimes\mathds{C}^m$ (with the Hilbert space norm) and $\mathbb{M}_{n,m}$ (with the Frobenius norm), i.e.
\begin{align}\label{Frobs}
\|\psi\|^2=\|\mathrm{mat}(\psi)\|_F^2,\qquad
\|\psi-\varphi\|^2=\|\mathrm{mat}(\psi)-\mathrm{mat}(\varphi)\|_F^2.
\end{align}
Moreover,
\[
\SR(\psi)=\mathrm{rank}(\mathrm{mat}(\psi)),
\]
and if $\psi$ has Schmidt coefficients $s_1(\psi)\ge\cdots\ge s_d(\psi)\ge 0$
(extended by zeros if necessary), then
\[
s_i(\psi)=\sigma_i(\mathrm{mat}(\psi)),\qquad i=1,\dots,d,
\]
where $\sigma_1\ge \sigma_2\ge \cdots$ denote the singular values.

For a matrix \(X\), we write \(\sigma_1(X)\ge \sigma_2(X)\ge\cdots\) for its singular values,
\(\|X\|_1:=\sum_j \sigma_j(X)\) for the trace norm, and
\(\|X\|_{(k)}:=\sum_{j=1}^k \sigma_j(X)\) for the Ky--Fan \(k\)-norm.

We  use the standard column-stacking vectorization $\mathrm{vec}:\mathbb{M}_d\to\mathds{C}^{d}\otimes\mathds{C}^{d}$ and its inverse
$\mathrm{mat}$, characterized by the identity
\begin{equation}
\label{eq:vec-AXB}
\mathrm{vec}(A X B)=\big(B^{\mathsf T}\otimes A\big)\mathrm{vec}(X),
\qquad A,B,X\in M_d.
\end{equation}

\begin{definition}
We say that $W\in\mathbb{M}_n\otimes \mathbb{M}_m $ is \emph{block-positive} if
\[
\langle x\otimes y,\ W(x\otimes y)\rangle \ge 0
\quad\text{for all } x\in\mathds{C}^n,\ y\in\mathds{C}^m.
\]
\end{definition}

\begin{definition}
Fix $k\in\mathbb{N}$. We say that $W\in\mathbb{M}_n\otimes \mathbb{M}_m$ is \emph{$k$-block-positive} if
\[
\langle \psi,\ W\psi\rangle \ge 0
\quad\text{for all unit vectors } \psi\in\mathds{C}^n\otimes\mathds{C}^m \text{ with } \SR(\psi)\le k.
\]
\end{definition}

A linear map $\Phi:\mathbb{M}_n\to \mathbb{M}_m$ is \emph{Hermitian-preserving} if $\Phi( \mathbb{H}_n)\subseteq \mathbb{H}_m$.
It is \emph{positive} if $\Phi(\mathbb{M}_n^+)\subseteq \mathbb{M}_m^+$.

For $k\in\mathbb{N}$, the \emph{$k$-amplification} of $\Phi$ is $\mathrm{id}_k\otimes \Phi: \mathbb{M}_k\otimes \mathbb{M}_n\to \mathbb{M}_k\otimes \mathbb{M}_m$.
The map $\Phi$ is  $k$-positive  if $\mathrm{id}_k\otimes \Phi$ is positive. If $\Phi$ is $k$-positive for all $k\in\mathbb{N}$, then it is called completely positive (CP).

\smallskip
\noindent
We will frequently identify linear maps with matrices via the Choi representation, so we record the convention we use.
 Let $\{E_{ij}\}_{i,j=1}^n$ be the standard matrix units in $\mathbb{M}_n$.
The \emph{Choi matrix} of a linear map $\Phi:\mathbb{M}_n\to \mathbb{M}_m$ is
\[
C_\Phi \;:=\;\sum_{i,j=1}^n E_{ij}\otimes \Phi(E_{ij})\ \in\ \mathbb{M}_n\otimes \mathbb{M}_m.
\]
If $\Phi$ is Hermitian-preserving then $C_\Phi$ is Hermitian.

Equivalently, if $|\Omega\rangle:=\sum_{i=1}^n e_i\otimes e_i\in \mathds{C}^n\otimes\mathds{C}^n$ is the (unnormalized)
maximally entangled vector, then
\[
C_\Phi = (\mathrm{id}_n\otimes \Phi)\big(|\Omega\rangle\langle \Omega|\big).
\]
We identify linear maps with their Choi matrices. The associated Choi pairing is
\[
\langle \Phi,\Psi\rangle_{\mathrm{Choi}}
:= \operatorname{Tr}(C_\Phi\,C_\Psi),
\]
i.e., the Hilbert--Schmidt inner product of Choi matrices.

There exists a standard correspondence as
\[
\Phi\ \text{is a  $k$-positive map}
\quad\Longleftrightarrow\quad
C_\Phi\ \text{is a $k$-block-positive matrix}.
\]



\section{\texorpdfstring{Fractional $\alpha$-positivity}{Fractional alpha-positivity}}
\label{sec:alpha-positivity}
\noindent
We now introduce the central objects of the paper.
The guiding idea is to interpolate the integer Schmidt-rank hierarchy by restricting positivity tests to a controlled family of vectors.

Let $d:=\min\{n,m\}$, so every $\psi\in\mathds C^n\otimes\mathds C^m$ has at most $d$ nonzero Schmidt coefficients.

Given $\alpha\in[1,d]$, set $k:=\lfloor\alpha\rfloor$, $\theta:=\alpha-k\in[0,1)$, and $r:=\lceil\alpha\rceil$.

\begin{definition}[$\alpha$-admissible unit vectors]\label{def:alpha-admissible}
Fix $\alpha\in[1,d]$ and write $k:=\lfloor\alpha\rfloor$, $\theta:=\alpha-k\in[0,1)$, and $r:=\lceil\alpha\rceil$.
A unit vector $\psi\in\mathds C^n\otimes\mathds C^m$ with ordered Schmidt coefficients
$s_1(\psi)\ge\cdots\ge s_d(\psi)$ is \emph{$\alpha$-admissible} if:
\begin{enumerate}
\item[(i)] (\emph{Rank ceiling}) $s_j(\psi)=0$ for all $j\ge r+1$ (equivalently $\SR(\psi)\le r$);
\item[(ii)] (\emph{Normalized ratio}) if $\theta>0$, then
\begin{align}\label{normaliz}
s_{k+1}(\psi)\ \le\ \frac{\theta}{k}\sum_{j=1}^k s_j(\psi).
\end{align}
\end{enumerate}
\end{definition}
\noindent
Let \(\mathcal V_\alpha\) denote the set of all \(\alpha\)-admissible unit vectors in \(\mathds C^n\otimes\mathds C^m\).

\begin{remark}
\label{rem:alpha-meaning}
\begin{enumerate}
\item[\textup{(i)}] If $\alpha=k$ is an integer, then $\theta=0$ and $r=k$, so Definition~\ref{def:alpha-admissible}
reduces to $\SR(\psi)\le k$. Hence
\[
\mathcal{V}_k=\{\psi:\ \|\psi\|=1,\ \SR(\psi)\le k\}.
\]

\item[\textup{(ii)}] If $\alpha\in(k,k+1)$, then $r=k+1$, so $\psi\in\mathcal{V}_\alpha$ if and only if
$\SR(\psi)\le k+1$ and
\[
\frac{k\,s_{k+1}(\psi)}{\sum_{j=1}^k s_j(\psi)}\ \le\ \theta.
\]
Equivalently, $\alpha=k+\theta$ allows at most one additional Schmidt coefficient beyond level $k$, and
$\theta$ controls its size relative to the leading $k$ coefficients.

\item[\textup{(iii)}] For every unit vector with $\SR(\psi)\le k+1$ one always has
\[
s_{k+1}(\psi)\ \le\ \frac{1}{k}\sum_{j=1}^k s_j(\psi)
\qquad\big(\text{since } \sum_{j=1}^k s_j(\psi)\ge k\,s_{k+1}(\psi)\big).
\]
Hence as $\theta\uparrow 1$, the ratio constraint becomes automatic and $\mathcal{V}_{k+\theta}$ increases
all the way up to the full rank-$\le k+1$ class. In particular, there is no plateau (``dead zone'') for $\alpha\in(k,k+1)$.

\item[\textup{(iv)}] If   $X=\mathrm{mat}(\psi)\in \mathbb{M}_{n,m}$ is  a fixed matricization, then \eqref{normaliz} can be written as
\begin{align}
\label{eq:norm-ratio}
\frac{\|X\|_1}{\|X\|_{(k)}}\ \le\ 1+\frac{\theta}{k}\,\,\, \text{or equivalently }\,\,\, \sigma_{k+1}(X)\ \le\ \frac{\theta}{k}\sum_{j=1}^k \sigma_j(X),
\end{align}
where $\|\cdot\|_{(k)}$ are  the Ky-Fan norms.
\end{enumerate}

\end{remark}

Now we define fractional cones.

\begin{definition}
  We define the fractional  $\alpha$-superpositive cone as
  \begin{align}\label{cone}
    \mathsf{K}_\alpha:=\overline{\mathrm{cone}}\{\,\psi\psi^*:\ \psi\in\mathcal{V}_\alpha\,\}\subseteq (\mathbb{M}_n\otimes \mathbb{M}_m)^+,
  \end{align}
  and its dual, the fractional $\alpha$-block-positive  cone as
  \begin{align}\label{alpha-block-cone}
\mathsf{BP}_\alpha:=\mathsf{K}_\alpha^\ast
=\{\,W\in (\mathbb{M}_n\otimes \mathbb{M}_m)^{\mathrm{h}}:\ \langle\psi,W\psi\rangle\ge 0\ \text{for all }\psi\in\mathcal{V}_\alpha\,\}.
\end{align}
\end{definition}
\noindent
In words, $\mathsf K_\alpha$ consists of all positive semidefinite operators that can be approximated by conic combinations of rank-one projectors onto $\alpha$-admissible vectors; it interpolates between the separable cone and the full positive cone as $\alpha$ increases.


We will give basic properties of our fractional cones here.
For $\alpha\in[1,d]$ and $W\in (\mathbb{M}_n\otimes \mathbb{M}_m)^{\mathrm{h}}$, define
\begin{equation}
\label{eq:lambda-alpha}
\lambda_\alpha(W)\;:=\;\min\Big\{\langle x,\,W x\rangle:\ x\in\mathcal{V}_\alpha\Big\}.
\end{equation}

\begin{proposition}
\label{prop:compact-lambda}
Fix $\alpha\in[1,d]$ and write $k:=\lfloor\alpha\rfloor$ and $\theta:=\alpha-k\in[0,1)$.
Then:
\begin{enumerate}
\item[\textup{(a)}] The set $\mathcal{V}_\alpha$ is a nonempty compact subset of the unit sphere of
$\mathbb{C}^n\otimes\mathbb{C}^m$.
\item[\textup{(b)}] For every $W\in (\mathbb{M}_n\otimes \mathbb{M}_m)^{\mathrm{h}}$, the minimum in \eqref{eq:lambda-alpha} is attained.
Moreover, the map $W\mapsto \lambda_\alpha(W)$ is concave and Lipschitz (hence continuous).
\item[\textup{(c)}] $\mathsf{BP}_\alpha$ is a closed convex cone in $(\mathbb{M}_n\otimes \mathbb{M}_m)^{\mathrm{h}}$. In particular,
\[
W\in \mathsf{BP}_\alpha
\quad\Longleftrightarrow\quad
\lambda_\alpha(W)\ge 0.
\]
\end{enumerate}
\end{proposition}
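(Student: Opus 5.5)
For (a), nonemptiness is immediate: any product unit vector $e_1\otimes f_1$ has $s_1=1$ and $s_j=0$ for $j\ge 2$. It therefore satisfies the rank ceiling, and the ratio condition holds trivially because $s_{k+1}=0$ when $k\ge 1$. For compactness, $\mathcal V_\alpha$ already sits inside the compact unit sphere, so it suffices to show it is closed. I would work through the matricization isometry \eqref{Frobs}. The singular values $\sigma_j(\mathrm{mat}(\psi))=s_j(\psi)$ are continuous functions of $\psi$; for instance, Weyl's perturbation inequality gives $|\sigma_j(X)-\sigma_j(Y)|\le\|X-Y\|_{\mathrm{op}}\le\|X-Y\|_F$. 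Hence the rank-ceiling condition $s_j(\psi)=0$ for $j\ge r+1$ is an intersection of zero sets of continuous functions, and so is closed. The ratio condition \eqref{normaliz} reads $s_{k+1}(\psi)-\frac{\theta}{k}\sum_{j\le k}s_j(\psi)\le 0$, which is the sublevel set of a continuous function and is therefore also closed. Intersecting these closed sets with the sphere gives compactness.

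For (b), fix $W$. The function $x\mapsto\langle x,Wx\rangle$ is real-valued, since $W$ is Hermitian, and continuous. By (a) it attains its minimum on the compact set $\mathcal V_\alpha$ (Weierstrass). Concavity follows because $\lambda_\alpha$ is a pointwise infimum of the family of linear functionals $W\mapsto\langle x,Wx\rangle$ with $x\in\mathcal V_\alpha$. For the Lipschitz property, take any unit $x$; then
\[
|\langle x,Wx\rangle-\langle x,W'x\rangle|\le\|W-W'\|_{\mathrm{op}}\le\|W-W'\|_2 .
\]
Choose $x'$ attaining $\lambda_\alpha(W')$. Then $\lambda_\alpha(W)\le\langle x',Wx'\rangle\le\lambda_\alpha(W')+\|W-W'\|$, and the symmetric argument gives the reverse bound. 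So $\lambda_\alpha$ is $1$-Lipschitz.

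For (c), the characterization in \eqref{alpha-block-cone} shows that $\mathsf{BP}_\alpha$ is the intersection over $x\in\mathcal V_\alpha$ of closed half-spaces $\{W:\langle x,Wx\rangle\ge 0\}$. Each of these is a closed convex cone, so their intersection is one as well. The equivalence $W\in\mathsf{BP}_\alpha\iff\lambda_\alpha(W)\ge 0$ is immediate because the minimum is attained, and the superlevel set $\{\lambda_\alpha\ge0\}$ is closed by the continuity proved in (b). I would also check that $\mathsf K_\alpha^\ast$ coincides with this half-space description. A functional that is nonnegative on all generators $\psi\psi^*$ stays nonnegative on their conic hull, and by continuity of the pairing also on its closure; the converse direction is trivial. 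The only point needing care is the continuity of the ordered singular values in the closedness step of (a), and the Weyl inequality handles it.
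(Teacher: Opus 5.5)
Your proof is correct and follows the paper's argument essentially step for step. You show closedness of $\mathcal V_\alpha$ in the unit sphere via continuity of the singular values, use Weierstrass for attainment and a pointwise minimum of linear functionals for concavity, and get the Lipschitz property from the uniform bound $|\langle x,Wx\rangle-\langle x,W'x\rangle|\le\|W-W'\|$. The differences are cosmetic: you justify continuity with Weyl's inequality rather than a citation, handle the rank ceiling through zero sets of the $s_j$ rather than minors, and in (c) write $\mathsf{BP}_\alpha$ as an intersection of closed half-spaces, which makes the cone property explicit, where the paper uses the superlevel set $\{\lambda_\alpha\ge 0\}$ of a concave continuous function.
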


\begin{proof}
\noindent\textup{(a)} Because any unit simple tensor $u\otimes v$ has Schmidt rank $1$, hence lies in $\mathcal{V}_\alpha$
for every $\alpha\in[1,d]$ and so each $\mathcal{V}_\alpha$ is nonempty.

Let $\mathbb{S}$ denote the unit sphere of $\mathbb{C}^n\otimes\mathbb{C}^m$, which is compact.
It suffices to show that $\mathcal{V}_\alpha$ is closed in $\mathbb{S}$.

Consider the continuous map $x\mapsto X=\mathrm{mat}(x)\in \mathbb{M}_{n,m}$ given by a fixed matricization.
The Schmidt coefficients of $x$ coincide with the singular values of $X$; in particular, each function
$x\mapsto s_j(x)$ is continuous on $\mathbb{S}$  \cite[Ch.~II]{BhatiaMatrix}(singular values depend continuously on the matrix entries).

If $\theta=0$, then $\mathcal{V}_k=\{x\in\mathbb{S}:\mathrm{rank}(\mathrm{mat}(x))\le k\}$.
The rank constraint is closed because it is equivalent to the vanishing of all $(k+1)\times(k+1)$ minors, which are polynomial
functions of the entries.

If $\theta\in(0,1)$, then $\mathcal{V}_\alpha$ is the intersection of the closed set
$\{x\in\mathbb{S}: s_{k+2}(x)=0\}$ (equivalently $\mathrm{rank}(\mathrm{mat}(x))\le k+1$) with the closed set
$\{x\in\mathbb{S}: s_{k+1}(x)-\frac{\theta}{k}\sum_{j=1}^k s_j(x)\le 0\}$, since the left-hand side is continuous in $x$.
Hence $\mathcal{V}_\alpha$ is closed in $\mathbb{S}$.

Therefore $\mathcal{V}_\alpha$ is a closed subset of a compact set, and is compact.

\medskip
\noindent\textup{(b)} For fixed $W$, the function $x\mapsto \langle x,Wx\rangle$ is continuous on the compact set $\mathcal{V}_\alpha$,
so the minimum in \eqref{eq:lambda-alpha} is attained.

Concavity in $W$ follows because $\lambda_\alpha$ is a pointwise minimum of linear functionals:
for $t\in[0,1]$,
\[
\lambda_\alpha(tW_1+(1-t)W_2)
=
\min_{x\in\mathcal{V}_\alpha}\big(t\langle x,W_1x\rangle+(1-t)\langle x,W_2x\rangle\big)
\ge
t\lambda_\alpha(W_1)+(1-t)\lambda_\alpha(W_2).
\]
Finally, $\lambda_\alpha$ is Lipschitz with respect to the operator norm:
if $\|W-W'\|\le\varepsilon$, then for all unit $x$ we have
$|\langle x,Wx\rangle-\langle x,W'x\rangle|\le\varepsilon$, hence
$|\lambda_\alpha(W)-\lambda_\alpha(W')|\le\varepsilon$.

\medskip
\noindent\textup{(c)} By definition,
\[
W\in\mathsf{BP}_\alpha
\quad\Longleftrightarrow\quad
\langle x,Wx\rangle\ge 0\ \ \text{for all }x\in\mathcal{V}_\alpha.
\]
This is equivalent to $\min_{x\in\mathcal{V}_\alpha}\langle x,Wx\rangle\ge 0$, i.e.\ $\lambda_\alpha(W)\ge 0$.

Since $\mathsf{BP}_\alpha=\{W:\lambda_\alpha(W)\ge 0\}$ and $\lambda_\alpha$ is concave and continuous, it follows that
$\mathsf{BP}_\alpha$ is a closed convex cone in $(\mathbb{M}_n\otimes \mathbb{M}_m)^{\mathrm{h}}$.
\end{proof}


As a consequence of the compactness of $\mathcal{V}_\alpha$, we  conclude that the convex cone generated by $\mathcal{V}_\alpha$ is automatically closed.

\begin{lemma}
\label{lem:cone-closed-trace1}
Let $S\subset (\mathbb{M}_n\otimes \mathbb{M}_m)^{\mathrm{h}}$ be compact and assume that
$\Tr(H)=1$ for all $H\in S$.
Define the (conic) hull
\[
\cone(S):=\Big\{\sum_{i=1}^r t_i H_i:\ r\in\mathbb{N},\ t_i\ge 0,\ H_i\in S\Big\}.
\]
Then $\cone(S)$ is a closed convex cone. Moreover,
\begin{equation}\label{eq:cone-as-scaling}
\cone(S)=\{\,t\rho:\ t\ge 0,\ \rho\in \conv(S)\,\}.
\end{equation}
In particular, $\overline{\cone(S)}=\cone(S)$.
\end{lemma}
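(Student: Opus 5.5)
The plan is to prove the identity \eqref{eq:cone-as-scaling} first, and then deduce closedness from compactness of $\conv(S)$ together with the trace normalization.

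\emph{Step 1: the identity.} The inclusion $\supseteq$ is immediate, since $t\rho=\sum_i (t\lambda_i)H_i$ for any convex combination $\rho=\sum_i\lambda_i H_i$. For $\subseteq$, take $X=\sum_{i=1}^r t_iH_i$ with $t_i\ge 0$ and $H_i\in S$. Put $t:=\sum_i t_i$, and observe that $\Tr(X)=t$ because $\Tr H_i=1$. If $t=0$, then $X=0=0\cdot H$ for any $H\in S$. Here I use that $S$ is nonempty; if $S=\emptyset$, the cone is $\{0\}$ by convention and everything is trivial. If $t>0$, write $X=t\rho$ with $\rho=\sum_i (t_i/t)H_i\in\conv(S)$. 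Convexity and the cone property of $\cone(S)$ are clear from the definition.

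\emph{Step 2: $\conv(S)$ is compact.} Since $S$ lies in a finite-dimensional real vector space of dimension $N$, Carathéodory's theorem writes every point of $\conv(S)$ as a convex combination of at most $N+1$ points of $S$. Hence $\conv(S)$ is the image of the compact set $\Delta_{N+1}\times S^{N+1}$ under the continuous map $(\lambda,H_1,\dots,H_{N+1})\mapsto\sum_j\lambda_jH_j$, where $\Delta_{N+1}$ is the standard simplex. A continuous image of a compact set is compact. This is the main step, and Carathéodory is what makes it work.

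\emph{Step 3: closedness.} Let $X_\ell=t_\ell\rho_\ell\to X$ with $t_\ell\ge0$ and $\rho_\ell\in\conv(S)$. Taking traces gives $t_\ell=\Tr(X_\ell)\to\Tr(X)=:t$, so the sequence $(t_\ell)$ is bounded and its limit satisfies $t\ge 0$. By Step 2, pass to a subsequence with $\rho_\ell\to\rho\in\conv(S)$. Then $X=t\rho\in\cone(S)$ by Step 1.

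The trace normalization is essential here: it is exactly what prevents the scalars $t_\ell$ from escaping to infinity while the $\rho_\ell$ degenerate. That $\cone(S)$ is closed then gives $\overline{\cone(S)}=\cone(S)$. Applied to $S=\{\psi\psi^*:\psi\in\mathcal V_\alpha\}$, which is compact by Proposition~\ref{prop:compact-lambda}(a) and consists of trace-one operators, this shows the closure in \eqref{cone} is superfluous.
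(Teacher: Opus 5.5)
Your proof is correct and follows essentially the same route as the paper: you establish the scaling identity, use Carath\'eodory to get compactness of $\conv(S)$, and use the trace to control the scalars $t_\ell$ before extracting a convergent subsequence of the $\rho_\ell$. The only differences are cosmetic: you absorb the limit case $t=0$ into the same subsequence argument instead of the paper's separate boundedness argument, and you spell out the identity and the empty-$S$ edge case explicitly, all of which is valid.
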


\begin{proof}
We work in the finite-dimensional real vector space
$V:=(\mathbb{M}_n\otimes \mathbb{M}_m)^{\mathrm{h}}$ equipped with any norm.
Since $S$ is compact, its convex hull $\conv(S)$ is compact as well
(see, e.g., \cite[Thm.~1.1.11]{SchneiderCB}; this is a standard consequence of
Carath\'eodory's theorem \cite[Thm.~1.1.4]{SchneiderCB}).

\noindent First note that  \eqref{eq:cone-as-scaling} follows clearly form the convex hull definition.

\noindent
Let $(Y_\ell)_{\ell\in\mathbb{N}}\subset \cone(S)$ be a sequence with $Y_\ell\to Y$ in $V$.
Using \eqref{eq:cone-as-scaling}, write
\[
Y_\ell=t_\ell \rho_\ell,\qquad t_\ell\ge 0,\ \rho_\ell\in \conv(S).
\]
Taking traces and using $\Tr(\rho_\ell)=1$ (since $\conv(S)$ is contained in the affine hyperplane $\{\Tr=1\}$),
we obtain
\[
t_\ell=\Tr(Y_\ell)\ \longrightarrow\ \Tr(Y)=:t.
\]
In particular $t\ge 0$.

If $t=0$, then $t_\ell\to 0$. Since $\conv(S)$ is compact, it is bounded:
there exists $M>0$ with $\|\rho\|\le M$ for all $\rho\in\conv(S)$.
Hence $\|Y_\ell\|=\|t_\ell\rho_\ell\|\le t_\ell M\to 0$, so $Y=0\in\cone(S)$.

If $t>0$, compactness of $\conv(S)$ implies sequential compactness, so after passing to a subsequence
we may assume $\rho_{\ell_j}\to \rho\in\conv(S)$. Then
\[
Y_{\ell_j}=t_{\ell_j}\rho_{\ell_j}\ \longrightarrow\ t\rho,
\]
and by uniqueness of limits we have $Y=t\rho\in\cone(S)$.

Thus every convergent sequence in $\cone(S)$ converges to a point of $\cone(S)$, i.e.\ $\cone(S)$ is closed.
Convexity is immediate from the definition of $\cone(S)$, so $\cone(S)$ is a closed convex cone.
\end{proof}

\begin{corollary}
\label{cor:Kalpha-no-closure}
With $\mathcal{S}_\alpha:=\{x x^\ast:\ x\in\mathcal{V}_\alpha\}$ one has
\[
\mathsf{K}_\alpha=\mathrm{cone}(\mathcal{S}_\alpha).
\]
\end{corollary}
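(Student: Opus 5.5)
The plan is to apply Lemma~\ref{lem:cone-closed-trace1} with $S=\mathcal{S}_\alpha$. Two hypotheses have to be checked: that $\mathcal{S}_\alpha$ is a compact subset of $(\mathbb{M}_n\otimes\mathbb{M}_m)^{\mathrm{h}}$, and that every element of $\mathcal{S}_\alpha$ has trace one. Once both are in hand, the lemma says $\mathrm{cone}(\mathcal{S}_\alpha)$ is already closed. The definition \eqref{cone} writes $\mathsf{K}_\alpha$ as the closed conic hull of $\mathcal{S}_\alpha$, which is the closure of $\mathrm{cone}(\mathcal{S}_\alpha)$, so the closure bar can be dropped.

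First, the trace condition. Each $x\in\mathcal{V}_\alpha$ is a unit vector by Definition~\ref{def:alpha-admissible}, so $xx^\ast$ is a Hermitian (indeed positive) rank-one operator with $\Tr(xx^\ast)=\|x\|^2=1$.

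Second, compactness. The map $\pi: x\mapsto xx^\ast$ from $\mathbb{C}^n\otimes\mathbb{C}^m$ to $(\mathbb{M}_n\otimes\mathbb{M}_m)^{\mathrm{h}}$ is continuous, since its entries are quadratic polynomials in the coordinates of $x$ and their conjugates. By Proposition~\ref{prop:compact-lambda}(a), $\mathcal{V}_\alpha$ is compact. Hence $\mathcal{S}_\alpha=\pi(\mathcal{V}_\alpha)$ is compact, being the continuous image of a compact set.

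Finally, one should confirm that the ``closed conic hull'' in \eqref{cone} means the closure of $\mathrm{cone}(\mathcal{S}_\alpha)$ in the sense of Lemma~\ref{lem:cone-closed-trace1}, that is, finite nonnegative combinations; this is the standard reading. The lemma then gives $\mathsf{K}_\alpha=\overline{\mathrm{cone}(\mathcal{S}_\alpha)}=\mathrm{cone}(\mathcal{S}_\alpha)$. There is no real obstacle: the substantive input, compactness of $\mathcal{V}_\alpha$, is already supplied by Proposition~\ref{prop:compact-lambda}(a).
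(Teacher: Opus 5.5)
Your proposal is correct and follows the paper's proof essentially verbatim. Both arguments note that $\mathcal{S}_\alpha$ is compact as the continuous image of the compact set $\mathcal{V}_\alpha$ (Proposition~\ref{prop:compact-lambda}(a)) and consists of trace-one elements. Both then invoke Lemma~\ref{lem:cone-closed-trace1} to conclude $\overline{\mathrm{cone}}(\mathcal{S}_\alpha)=\mathrm{cone}(\mathcal{S}_\alpha)$.
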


\begin{proof}
By Proposition~\ref{prop:compact-lambda}\textup{(a)}, the set $\mathcal V_\alpha$ is compact.
The map $x\mapsto xx^*$ is continuous and $\Tr(xx^*)=\|x\|^2=1$, hence $\mathcal S_\alpha$ is compact and trace-$1$.
Lemma~\ref{lem:cone-closed-trace1} gives $\overline{\cone(\mathcal S_\alpha)}=\cone(\mathcal S_\alpha)$.
Since $\mathsf K_\alpha=\overline{\cone(\mathcal S_\alpha)}$ by \eqref{cone}, the claim follows.
\end{proof}

The next result shows that the interpolation is nontrivial: between $k$ and $k+1$ one obtains genuinely new cones.

\begin{proposition}
\label{prop:strict-inclusions}
Fix an integer $k\in\{1,\dots,d-1\}$ and $\theta\in(0,1)$, and set $\alpha:=k+\theta$.
Then:
\begin{enumerate}
\item[\textup{(a)}] $\mathcal{V}_k \subsetneq \mathcal{V}_\alpha \subsetneq \mathcal{V}_{k+1}$.
\item[\textup{(b)}] $\mathsf{K}_k \subsetneq \mathsf{K}_\alpha \subsetneq \mathsf{K}_{k+1}$.
\item[\textup{(c)}] $\mathsf{BP}_k \supsetneq \mathsf{BP}_\alpha \supsetneq \mathsf{BP}_{k+1}$.
\end{enumerate}
\end{proposition}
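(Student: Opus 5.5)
The plan is to prove (a) directly from Definition~\ref{def:alpha-admissible}, transfer it to (b) by showing that the rank-one elements of each cone $\mathsf K_\beta$ are exactly the projectors $\psi\psi^*$ with $\psi\in\mathcal V_\beta$, and obtain (c) from (b) by conic duality.

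For (a), the inclusions are immediate. If $\SR(\psi)\le k$, then $s_{k+1}(\psi)=0$, so the ratio condition \eqref{normaliz} holds trivially and the rank ceiling $r=k+1$ is respected; hence $\mathcal V_k\subseteq\mathcal V_\alpha$. Also $\mathcal V_\alpha\subseteq\mathcal V_{k+1}$, because the rank ceiling gives $\SR\le k+1$. For strictness, since $k+1\le d$, we choose orthonormal families $\{x_j\}_{j=1}^{k+1}$ and $\{y_j\}_{j=1}^{k+1}$ and set
\[
\psi_b:=c_b\Big(\sum_{j=1}^k x_j\otimes y_j+b\,x_{k+1}\otimes y_{k+1}\Big),\qquad c_b:=(k+b^2)^{-1/2},\quad b\in(0,1].
\]
Its ordered Schmidt coefficients are $c_b,\dots,c_b,c_b b$. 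The ratio in Remark~\ref{rem:alpha-meaning}(ii) is $k\,s_{k+1}/\sum_{j\le k}s_j=b$. Taking $b=\theta$ gives $\psi_\theta\in\mathcal V_\alpha\setminus\mathcal V_k$, since its Schmidt rank is $k+1$. Taking $b=1$ gives $\psi_1\in\mathcal V_{k+1}\setminus\mathcal V_\alpha$, since $1>\theta$.

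For (b), the inclusions follow from (a) and the definition \eqref{cone}. The key step for strictness is the following lemma: for $\beta\in[1,d]$ and a unit vector $\psi$,
\[
\psi\psi^*\in\mathsf K_\beta\iff\psi\in\mathcal V_\beta.
\]
By Corollary~\ref{cor:Kalpha-no-closure}, no closure is needed, so $\psi\psi^*\in\mathsf K_\beta$ means $\psi\psi^*=\sum_i t_i x_ix_i^*$ with $t_i>0$ and $x_i\in\mathcal V_\beta$. Each summand satisfies $t_ix_ix_i^*\le\psi\psi^*$, so its range lies in $\mathrm{span}\{\psi\}$, and therefore $x_i=\lambda_i\psi$ with $|\lambda_i|=1$. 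Since Schmidt coefficients are invariant under multiplication by a phase, $\mathcal V_\beta$ is phase-invariant, and so $\psi\in\mathcal V_\beta$. The converse is immediate from the definition of $\mathsf K_\beta$. Applying the lemma, $\psi_\theta\psi_\theta^*\in\mathsf K_\alpha\setminus\mathsf K_k$ and $\psi_1\psi_1^*\in\mathsf K_{k+1}\setminus\mathsf K_\alpha$. This rank-one extremality argument is the main (if modest) obstacle; it requires the closure-free description from Corollary~\ref{cor:Kalpha-no-closure}, since otherwise one would need a limiting version of the range argument.

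For (c), dualizing the inclusions of (b) reverses them. For strictness we use that each $\mathsf K_\beta$ is a closed convex cone, by Corollary~\ref{cor:Kalpha-no-closure} and Lemma~\ref{lem:cone-closed-trace1}. The bipolar theorem in the finite-dimensional real space $(\mathbb M_n\otimes\mathbb M_m)^{\mathrm h}$ then gives $\mathsf K_\beta=\mathsf{BP}_\beta^{\ast}$. Hence $\mathsf{BP}_k=\mathsf{BP}_\alpha$ would force $\mathsf K_k=\mathsf K_\alpha$, contradicting (b), and similarly for the other inclusion. If explicit witnesses are wanted, the separating hyperplane theorem produces them, but they are not needed for the proof.
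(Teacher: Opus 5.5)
Your proof is correct and follows essentially the same route as the paper. Part (a) uses explicit Schmidt-diagonal vectors, part (b) uses a rank-one extremality (range) argument after removing the closure, and part (c) uses conic duality. The differences are cosmetic: you take $b=1$ instead of $\theta'=(1+\theta)/2$, you cite Corollary~\ref{cor:Kalpha-no-closure} where the paper proves its Claim~1, and you use the bipolar theorem where the paper argues directly by separation.
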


\begin{proof}
\textup{(a)} By definition, $\mathcal{V}_k$ consists of unit vectors with Schmidt rank at most $k$.
Thus any $\psi\in\mathcal{V}_k$ satisfies $s_{k+1}(\psi)=0$ and $s_j(\psi)=0$ for all $j\ge k+1$.
In particular, $\psi$ satisfies the rank ceiling $\SR(\psi)\le k+1$ and also
\[
s_{k+1}(\psi)=0 \le \frac{\theta}{k}\sum_{j=1}^k s_j(\psi),
\]
so $\mathcal{V}_k\subseteq \mathcal{V}_\alpha$.
Also, $\mathcal{V}_\alpha\subseteq \mathcal{V}_{k+1}$ is immediate from the rank ceiling in
Definition~\ref{def:alpha-admissible} (for $\alpha\in(k,k+1)$ this ceiling is $\SR(\psi)\le k+1$).

We now prove both inclusions are strict.
Choose orthonormal families $\{e_i\}_{i=1}^{k+1}\subset\mathbb{C}^n$ and $\{f_i\}_{i=1}^{k+1}\subset\mathbb{C}^m$.
Set
\[
a:=\frac{1}{\sqrt{k+\theta^2}},
\qquad
b:=\frac{\theta}{\sqrt{k+\theta^2}}.
\]
Define
\begin{equation}
\label{eq:psi-theta}
\psi_\theta:=\sum_{i=1}^{k} a\, e_i\otimes f_i \;+\; b\, e_{k+1}\otimes f_{k+1}.
\end{equation}
Then $\|\psi_\theta\|^2=ka^2+b^2=1$.
The Schmidt coefficients of $\psi_\theta$ are $a$ (with multiplicity $k$) and $b$ (with multiplicity $1$),
so $\SR(\psi_\theta)=k+1$ and
\[
s_{k+1}(\psi_\theta)=b=\theta a=\frac{\theta}{k}\sum_{j=1}^k s_j(\psi_\theta).
\]
Hence $\psi_\theta\in\mathcal{V}_\alpha$ but $\psi_\theta\notin\mathcal{V}_k$ (since $\SR(\psi_\theta)=k+1$),
so $\mathcal{V}_k\subsetneq\mathcal{V}_\alpha$.

For the second strictness, set $\theta':=(1+\theta)/2\in(\theta,1)$ and define
\[
a':=\frac{1}{\sqrt{k+(\theta')^2}},
\qquad
b':=\frac{\theta'}{\sqrt{k+(\theta')^2}},
\qquad
\psi_{\theta'}:=\sum_{i=1}^{k} a'\, e_i\otimes f_i \;+\; b'\, e_{k+1}\otimes f_{k+1}.
\]
Again $\|\psi_{\theta'}\|=1$ and $\SR(\psi_{\theta'})=k+1$, so $\psi_{\theta'}\in\mathcal{V}_{k+1}$.
However,
\[
s_{k+1}(\psi_{\theta'})=b'=\theta' a' > \theta a'=\frac{\theta}{k}\sum_{j=1}^k s_j(\psi_{\theta'}),
\]
so $\psi_{\theta'}\notin\mathcal{V}_\alpha$. Therefore $\mathcal{V}_\alpha\subsetneq\mathcal{V}_{k+1}$.

\medskip
\textup{(b)}
For $\beta\in\{k,\alpha,k+1\}$ set
\[
S_\beta:=\{\,xx^\ast:\ x\in\mathcal{V}_\beta\,\}\subset (\mathbb{M}_n\otimes \mathbb{M}_m)^+ .
\]
Since $\mathcal{V}_\beta$ is a closed subset of the unit sphere (hence compact) and
$x\mapsto xx^\ast$ is continuous, each $S_\beta$ is compact. Moreover, $\Tr(xx^\ast)=\|x\|^2=1$ for all
$x\in\mathcal{V}_\beta$, hence $\Tr(H)=1$ for all $H\in S_\beta$.

We record two elementary claims.

 claim 1.
Let $S\subset (\mathbb{M}_n\otimes \mathbb{M}_m)^{\mathrm{h}}$ be compact and assume $\Tr(H)=1$ for all $H\in S$.
If $\rho\in \overline{\cone}(S)$ and $\Tr(\rho)=1$, then $\rho\in \conv(S)$.

Proof of Claim~1.
Pick $\rho_j\in \cone(S)$ with $\rho_j\to \rho$. Write $\rho_j=t_j \sigma_j$ where
\[
t_j:=\Tr(\rho_j)\ge 0,
\qquad
\sigma_j:=\rho_j/t_j \quad (t_j>0).
\]
Since $\rho_j\in\cone(S)$ and every element of $S$ has trace $1$, we have $\Tr(\sigma_j)=1$ and hence
$\sigma_j\in \conv(S)$ (indeed, $\rho_j=\sum_i a_{i,j}H_{i,j}$ with $a_{i,j}\ge 0$, $H_{i,j}\in S$, so
$t_j=\sum_i a_{i,j}$ and $\sigma_j=\sum_i (a_{i,j}/t_j)H_{i,j}$ is a convex combination).
By continuity of the trace, $t_j=\Tr(\rho_j)\to \Tr(\rho)=1$, so for all large $j$ we have $t_j>0$ and
\[
\sigma_j=\rho_j/t_j \longrightarrow \rho .
\]
Finally, $\conv(S)$ is compact (hence closed) because $S$ is compact in finite dimension
(see, e.g., \cite[Thm.~1.1.11]{SchneiderCB}), so $\rho\in\conv(S)$.

 claim~2.
Let $\mathcal{T}$ be a set of rank-one positive semidefinite matrices of trace $1$.
If $\rho$ is rank-one, $\rho\geq  0$, $\Tr(\rho)=1$, and $\rho\in \conv(\mathcal{T})$, then in fact $\rho\in \mathcal{T}$.

 proof of claim~2.
Write $\rho=\sum_{i=1}^N p_i\,\tau_i$ with $p_i>0$, $\sum_i p_i=1$, and $\tau_i\in \mathcal{T}$.
Let $v$ be any unit vector orthogonal to $\mathrm{Ran}(\rho)$; since $\rho$ has rank one this means
$v\perp \psi$ where $\rho=\psi\psi^\ast$ with $\|\psi\|=1$. Then
\[
0=\langle v,\rho v\rangle=\sum_{i=1}^N p_i\,\langle v,\tau_i v\rangle.
\]
Each $\tau_i\geq 0$, hence each term $\langle v,\tau_i v\rangle\ge 0$, and since $p_i>0$ we get
$\langle v,\tau_i v\rangle=0$ for all $i$. For $\tau_i=\varphi_i\varphi_i^\ast$ this means
$0=\langle v,\varphi_i\varphi_i^\ast v\rangle=|\langle v,\varphi_i\rangle|^2$, so $v\perp \varphi_i$.

Since $v\perp\psi \Rightarrow v\perp\varphi_i$, we have
$\mathrm{span}\{\psi\}^\perp \subseteq \mathrm{span}\{\varphi_i\}^\perp$.
Taking orthogonal complements yields
$\mathrm{span}\{\varphi_i\}\subseteq \mathrm{span}\{\psi\}$, so $\varphi_i=c_i\psi$.
As $\|\varphi_i\|=\|\psi\|=1$, we have $|c_i|=1$ and hence
$\tau_i=\varphi_i\varphi_i^\ast=\psi\psi^\ast=\rho$. Therefore $\rho\in \mathcal{T}$.

\medskip
\noindent
We now prove the strict inclusions in part (b). From part \textup{(a)} we have
$\mathcal{V}_k\subseteq \mathcal{V}_\alpha\subseteq \mathcal{V}_{k+1}$, hence
$S_k\subseteq S_\alpha\subseteq S_{k+1}$ and therefore
\[
\mathsf{K}_k=\overline{\cone}(S_k)\subseteq \overline{\cone}(S_\alpha)=\mathsf{K}_\alpha
\subseteq \overline{\cone}(S_{k+1})=\mathsf{K}_{k+1}.
\]

\medskip
\noindent
To see strictness of $\mathsf{K}_k\subsetneq \mathsf{K}_\alpha$, let $\psi_\theta$ be the unit vector constructed in
part \textup{(a)} so that $\psi_\theta\in \mathcal{V}_\alpha\setminus \mathcal{V}_k$, and set
$\rho_\theta:=\psi_\theta\psi_\theta^*$.
Then $\rho_\theta\in S_\alpha\subseteq \mathsf{K}_\alpha$ and $\Tr(\rho_\theta)=1$.
If $\rho_\theta\in\mathsf{K}_k=\overline{\cone}(S_k)$, Claim~1 gives
$\rho_\theta\in \conv(S_k)$. Since $\rho_\theta$ is rank one, Claim~2 yields
$\rho_\theta\in S_k$, i.e.\ $\psi_\theta\in \mathcal{V}_k$, a contradiction. Hence
$\rho_\theta\notin \mathsf{K}_k$ and $\mathsf{K}_k\subsetneq \mathsf{K}_\alpha$.

\medskip
\noindent
Finally, strictness of $\mathsf{K}_\alpha\subsetneq \mathsf{K}_{k+1}$ is analogous.
Let $\psi_{\theta'}$ be the unit vector from part \textup{(a)} with
$\psi_{\theta'}\in \mathcal{V}_{k+1}\setminus \mathcal{V}_\alpha$, and set
$\rho_{\theta'}:=\psi_{\theta'}\psi_{\theta'}^*$.
Then $\rho_{\theta'}\in S_{k+1}\subseteq \mathsf{K}_{k+1}$ and $\Tr(\rho_{\theta'})=1$.
If $\rho_{\theta'}\in\mathsf{K}_\alpha=\overline{\cone}(S_\alpha)$, then Claim~1
gives $\rho_{\theta'}\in \conv(S_\alpha)$, and by Claim~2 we get
$\rho_{\theta'}\in S_\alpha$, i.e.\ $\psi_{\theta'}\in\mathcal{V}_\alpha$, a contradiction.
Thus $\rho_{\theta'}\notin \mathsf{K}_\alpha$ and $\mathsf{K}_\alpha\subsetneq \mathsf{K}_{k+1}$.

\medskip
\textup{(c)} We use the following finite-dimensional duality fact:
if $\mathcal{K}\subsetneq \mathcal{L}$ are closed convex cones in a finite-dimensional real inner product space, then $\mathcal{L}^\ast\subsetneq \mathcal{K}^\ast$. We emphasize that $\mathcal{K}^*$ denotes the \emph{dual cone} in the dual space $V^*$:
\[
\mathcal{K}^*=\{f\in V^*:\ f(y)\ge 0\ \forall y\in \mathcal{K}\},
\]
not the full dual space $V^*$ of all linear functionals.
Indeed, pick $x\in \mathcal{L}\setminus \mathcal{K}$. Since $\mathcal{K}$ is closed, convex, and contains $0$, the strong separation theorem yields a nonzero
linear functional $f$ such that $f(y)\ge 0$ for all $y\in \mathcal{K}$ but $f(x)<0$.
Thus $f\in \mathcal{K}^\ast$ and $f\notin \mathcal{L}^\ast$, proving $\mathcal{L}^\ast\subsetneq \mathcal{K}^\ast$.
Applying this to $\mathcal{K}=\mathsf{K}_k$, $\mathcal{L}=\mathsf{K}_\alpha$ and then to $\mathcal{K}=\mathsf{K}_\alpha$, $\mathcal{L}=\mathsf{K}_{k+1}$, and using
$\mathsf{BP}_\beta=\mathsf{K}_\beta^\ast$, we obtain
\[
\mathsf{BP}_k\supsetneq \mathsf{BP}_\alpha\supsetneq \mathsf{BP}_{k+1}.
\]
\end{proof}

\begin{proposition}\label{prop:local-unitary-invariance}
Fix $\alpha\in[1,d]$. Let $U\in \mathbb{M}_n$ and $V\in \mathbb{M}_m$ be unitary. Then:
\begin{align*}
&\mathrm{(i)}\,\,
\psi\in\mathcal{V}_\alpha
\quad\Longleftrightarrow\quad
(U\otimes V)\psi\in\mathcal{V}_\alpha\qquad\qquad\qquad \text{for any unit vector $\psi\in\mathds{C}^n\otimes\mathds{C}^m$}\\
&\mathrm{(ii)}\,\,
\rho\in\mathsf{K}_\alpha
\quad\Longleftrightarrow\quad
(U\otimes V)\,\rho\,(U\otimes V)^\ast\in\mathsf{K}_\alpha\qquad\qquad\text{for any $\rho\in \mathbb{M}_n\otimes \mathbb{M}_m$,}\\
&\mathrm{(iii)}\,\,
W\in\mathsf{BP}_\alpha
\quad\Longleftrightarrow\quad
(U\otimes V)\,W\,(U\otimes V)^\ast\in\mathsf{BP}_\alpha\qquad\text{for any $W\in (\mathbb{M}_n\otimes \mathbb{M}_m)^{\mathrm{h}}$}
\end{align*}
\end{proposition}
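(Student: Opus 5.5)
The plan is to reduce everything to part (i), which says that local unitaries preserve Schmidt coefficients. (ii) and (iii) then follow formally from the definitions of $\mathsf K_\alpha$ and $\mathsf{BP}_\alpha$.

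For (i), fix a unit vector $\psi$ with Schmidt decomposition $\psi=\sum_i s_i\, x_i\otimes y_i$. Then
\[
(U\otimes V)\psi=\sum_i s_i\,(Ux_i)\otimes(Vy_i).
\]
Since $\{Ux_i\}$ and $\{Vy_i\}$ are again orthonormal families, this is a Schmidt decomposition of $(U\otimes V)\psi$ with the same coefficients $s_1\ge s_2\ge\cdots$. Alternatively, at the matricization level one has $\mathrm{mat}((U\otimes V)\psi)=U\,\mathrm{mat}(\psi)\,V^{\mathsf T}$, and unitary invariance of singular values gives the same conclusion. In addition, $(U\otimes V)\psi$ is again a unit vector. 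Both conditions in Definition~\ref{def:alpha-admissible}, the rank ceiling and the ratio bound \eqref{normaliz}, depend only on the Schmidt coefficients. Hence $\psi\in\mathcal V_\alpha$ implies $(U\otimes V)\psi\in\mathcal V_\alpha$. The converse follows by applying the same argument to the unitaries $U^\ast,V^\ast$. Thus $U\otimes V$ restricts to a bijection of $\mathcal V_\alpha$.

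For (ii), let $\Gamma(\rho):=(U\otimes V)\rho(U\otimes V)^\ast$. This is a linear bijection with inverse of the same form, built from $U^\ast,V^\ast$. For each $x$ we have $\Gamma(xx^\ast)=((U\otimes V)x)((U\otimes V)x)^\ast$, so by (i) $\Gamma$ maps $\mathcal S_\alpha=\{xx^\ast:x\in\mathcal V_\alpha\}$ onto itself. Being linear, $\Gamma$ preserves nonnegative combinations, so it maps $\mathrm{cone}(\mathcal S_\alpha)$ into itself. By Corollary~\ref{cor:Kalpha-no-closure} this cone is $\mathsf K_\alpha$, so no closure issue arises. (Alternatively, $\Gamma$ is a homeomorphism and therefore commutes with closures.) Applying the same reasoning to $\Gamma^{-1}$ gives the reverse implication.

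For (iii), I compute
\[
\langle\psi,\Gamma(W)\psi\rangle=\langle (U\otimes V)^\ast\psi,\,W\,(U\otimes V)^\ast\psi\rangle .
\]
By (i) applied to $U^\ast,V^\ast$, the vector $(U\otimes V)^\ast\psi$ ranges over all of $\mathcal V_\alpha$ as $\psi$ does. Hence $\lambda_\alpha(\Gamma(W))=\lambda_\alpha(W)$, and Proposition~\ref{prop:compact-lambda}(c) gives the equivalence. I also note that $\Gamma(W)$ is Hermitian whenever $W$ is.

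There is no real obstacle here. The only point needing a moment's care is in (i): one must check that the ordered Schmidt coefficients are exactly preserved, not merely the rank, since the ratio constraint involves the ordered values $s_1,\dots,s_{k+1}$. This follows from the Schmidt-decomposition argument above.
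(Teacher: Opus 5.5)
Your proof is correct and complete. The paper actually states this proposition without a proof, but whenever it invokes local-unitary invariance (e.g.\ Step~1 in the proof of Theorem~\ref{thm:alpha-threshold-Phi-t}) it relies on exactly your part (i), namely that local unitaries preserve the ordered Schmidt coefficients, with (ii) and (iii) then following formally from the definitions of $\mathsf K_\alpha$ and $\mathsf{BP}_\alpha$ as you describe, including your handling of the closure via Corollary~\ref{cor:Kalpha-no-closure} and of the reverse implications via $U^\ast\otimes V^\ast$.
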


\subsection{\texorpdfstring{$\alpha$-positive and $\alpha$-superpositive maps}{Alpha-positive and alpha-superpositive maps}}
\label{subsec:alpha-maps}

Let $\Phi:\mathbb{M}_n\to \mathbb{M}_m$ be Hermitian-preserving and let $C_\Phi\in (\mathbb{M}_n\otimes \mathbb{M}_m)^{\mathrm{h}}$ denote its Choi matrix.

\begin{definition}[$\alpha$-positive maps]
\label{def:P-alpha}
For $\alpha\in[1,d]$, we say that a Hermitian-preserving map  $\Phi:\mathbb{M}_n\to \mathbb{M}_m$   is \emph{$\alpha$-positive} if $C_\Phi \in \mathsf{BP}_\alpha$.
Equivalently,
\[
\langle \psi,\, C_\Phi \psi\rangle \ge 0
\qquad\text{for all }\psi\in\mathcal{V}_\alpha,
\]
where $\mathcal{V}_\alpha$ is the set of $\alpha$-admissible vectors.
We denote the cone of $\alpha$-positive maps by $\mathsf{P}_\alpha$.

In addition,  we say that a Hermitian-preserving map $\Phi:\mathbb{M}_n\to \mathbb{M}_m$  is \emph{$\alpha$-superpositive} if $C_\Phi \in \mathsf{K}_\alpha$.
We denote the cone of $\alpha$-superpositive maps by $\mathsf{SP}_\alpha$.
\end{definition}

\begin{remark}\label{rem:maps}
Via the Choi identification, since the cones $\mathsf{K}_\alpha$ and $\mathsf{BP}_\alpha$ are dual,   consequently
\[
\mathsf{SP}_\alpha = \mathsf{P}_\alpha^\ast,
\qquad
\mathsf{P}_\alpha = \mathsf{SP}_\alpha^\ast,
\]
where duals are taken with respect to $\langle\cdot,\cdot\rangle_{\mathrm{Choi}}$. Moreover, it follows from Proposition~\ref{prop:strict-inclusions} that  $\mathsf{P}_k \supsetneq \mathsf{P}_\alpha \supsetneq \mathsf{P}_{k+1}$ and
$\mathsf{SP}_k \subsetneq \mathsf{SP}_\alpha \subsetneq \mathsf{SP}_{k+1}$.
\end{remark}

\begin{proposition}
\label{cor:unitary-covariance-basis}
Fix $\alpha\in[1,d]$ and let $\Phi:\mathbb{M}_n\to \mathbb{M}_m$ be Hermitian-preserving.
\begin{enumerate}
\item[\textup{(a)}] For all unitaries $U\in \mathbb{M}_m$, $V\in \mathbb{M}_n$,
\[
\Phi\in\mathsf{P}_\alpha
\quad\Longleftrightarrow\quad
\mathrm{Ad}_U\circ \Phi\circ \mathrm{Ad}_V\in\mathsf{P}_\alpha
\quad\text{and}\quad
\Phi\in\mathsf{SP}_\alpha
\quad\Longleftrightarrow\quad
\mathrm{Ad}_U\circ \Phi\circ \mathrm{Ad}_V\in\mathsf{SP}_\alpha.
\]
\item[\textup{(b)}] The membership of $\Phi$ in $\mathcal{P}_\alpha$ (and in $\mathcal{SP}_\alpha$) does not depend on the choice
of matrix units used to define $C_\Phi$.
\end{enumerate}
\end{proposition}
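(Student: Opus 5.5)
The plan is to compute the Choi matrix of $\Psi:=\mathrm{Ad}_U\circ\Phi\circ\mathrm{Ad}_V$ and show that it is a local unitary conjugate of $C_\Phi$. Proposition~\ref{prop:local-unitary-invariance}(ii),(iii) then finishes part (a).

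For part (a), write $C_\Phi=(\mathrm{id}_n\otimes\Phi)(\Omega\Omega^*)$, where $\Omega=\sum_i e_i\otimes e_i$. Then
\[
C_\Psi=(I\otimes U)\,(\mathrm{id}_n\otimes\Phi)\big((I\otimes V)\Omega\Omega^*(I\otimes V)^*\big)\,(I\otimes U)^*.
\]
The key step is the transpose trick $(I\otimes V)\Omega=(V^{\mathsf T}\otimes I)\Omega$, which is essentially \eqref{eq:vec-AXB} applied with $X=I$. Since $\mathrm{id}_n\otimes\Phi$ commutes with conjugation by $V^{\mathsf T}\otimes I$, this gives
\[
C_\Psi=(V^{\mathsf T}\otimes U)\,C_\Phi\,(V^{\mathsf T}\otimes U)^*.
\]
Here $V^{\mathsf T}$ is unitary, so Proposition~\ref{prop:local-unitary-invariance}(iii) gives the equivalence $\Phi\in\mathsf P_\alpha\iff\Psi\in\mathsf P_\alpha$, and (ii) gives the corresponding statement for $\mathsf{SP}_\alpha$. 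The reverse implications are automatic, since the inverse operation is of the same form: use $\mathrm{Ad}_{U^*}$ and $\mathrm{Ad}_{V^*}$. One should also check that $\Psi$ is again Hermitian-preserving, which is clear.

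For part (b), replace $\{E_{ij}\}$ by the matrix units $F_{ij}=We_ie_j^*W^*$ of another orthonormal basis, with $W$ unitary. The modified Choi matrix is
\[
\sum_{i,j} F_{ij}\otimes\Phi(F_{ij})=(\mathrm{id}\otimes\Phi)\big((W\otimes\overline W)\,\Omega\Omega^*\,(W\otimes\overline W)^*\big).
\]
Using $(W\otimes\overline W)\Omega=\Omega$, or equivalently the transpose trick once more, this can be rewritten as $(W\otimes I)\,C_{\Phi\circ\mathrm{Ad}_{W^{\mathsf T}}}\,(W\otimes I)^*$. More simply, one can just note that $\sum_{i,j} F_{ij}\otimes\Phi(F_{ij})=(W\otimes I)\,\tilde C\,(W\otimes I)^*$ with $\tilde C=\sum_{i,j} E_{ij}\otimes\Phi(WE_{ij}W^*)=C_{\Phi\circ\mathrm{Ad}_W}$. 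Then (a), together with Proposition~\ref{prop:local-unitary-invariance}, shows that membership is unchanged. If the basis of the output space is also changed, only a further local unitary on the second factor is introduced, and this is covered by the same argument.

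The main obstacle is bookkeeping rather than substance: getting the transpose/conjugate on $V$ and $W$ correct under the paper's conventions. Since transposes and conjugates of unitaries are unitary, any error there is harmless to the conclusion.
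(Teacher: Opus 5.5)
Your proposal is correct and follows the paper's own route. Both arguments identify $C_{\mathrm{Ad}_U\circ\Phi\circ\mathrm{Ad}_V}=(V^{\mathsf T}\otimes U)\,C_\Phi\,(V^{\mathsf T}\otimes U)^*$ and, for (b), show that the re-based Choi matrix is a local-unitary conjugate of $C_\Phi$, then invoke Proposition~\ref{prop:local-unitary-invariance}.

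There is one bookkeeping slip in your first formula for (b): the correct identity is $\sum_{i,j}F_{ij}\otimes F_{ij}=(W\otimes W)\Omega\Omega^*(W\otimes W)^*$, not $(W\otimes\overline W)(\cdots)$, so the new Choi matrix is $(WW^{\mathsf T}\otimes I)\,C_\Phi\,(WW^{\mathsf T}\otimes I)^*$. Your ``more simply'' route gets this right, and as you anticipated, the conclusion is unaffected.
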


\begin{proof}
\textup{(a)} The Choi matrix $C_{\mathrm{Ad}_U\circ \Phi\circ \mathrm{Ad}_V}$ is obtained from $C_\Phi$ by
conjugation with the local unitary $V^{\mathsf T}\otimes U$.
Since $V^{\mathsf T}$ is unitary, Proposition~\ref{prop:local-unitary-invariance}\textup{(c)} implies
$C_\Phi\in\mathsf{BP}_\alpha$ if and only if $C_{\mathrm{Ad}_U\circ \Phi\circ \mathrm{Ad}_V}\in\mathsf{BP}_\alpha$,
i.e.\ $\Phi\in\mathsf{P}_\alpha$ if and only if $\mathrm{Ad}_U\circ \Phi\circ \mathrm{Ad}_V\in\mathsf{P}_\alpha$.
The statement for $\mathsf{SP}_\alpha$ follows similarly using invariance of $\mathsf{K}_\alpha$.

\textup{(b)} Changing matrix units by a unitary $V$ (i.e.\ changing the basis of $\mathds{C}^n$) transforms the Choi matrix to
$(V^{\mathsf T}\otimes I)\,C_\Phi\,(V^{\mathsf T}\otimes I)^\ast$.
By Proposition~\ref{prop:local-unitary-invariance}\textup{(c)}, membership in $\mathsf{BP}_\alpha$ (and in $\mathsf{K}_\alpha$) is preserved,
hence so is membership in $\mathsf{P}_\alpha$ (and in $\mathsf{SP}_\alpha$).
\end{proof}

\medskip
\noindent
At integer levels, the familiar cones associated with $k$-positivity enjoy strong stability properties (e.g.,
compatibility with natural compositions and cone operations).  A key novelty of the fractional interpolation is that
some of these structural features break down away from integers.  The next proposition exhibits a concrete failure of
CP post-composition stability for non-integer $\alpha$, emphasizing that $\mathsf P_\alpha$ is genuinely new and not
a mere reformulation of the integer theory.

\begin{proposition}
\label{prop:failure-cp-new}
Fix an integer $k\in\{1,\dots,d-1\}$ and $\theta\in(0,1)$, and set $\alpha:=k+\theta$.
Then $\mathsf{P}_\alpha$ is not stable under CP post-composition: there exist a Hermitian-preserving
$\alpha$-positive map $\Phi:\mathbb{M}_n\to \mathbb{M}_m$ and a completely positive map $\Gamma:\mathbb{M}_m\to \mathbb{M}_m$ such that
$\Gamma\circ \Phi\notin\mathsf{P}_\alpha$.

More precisely, if $\Phi\in\mathsf{P}_\alpha\setminus \mathsf{P}_{k+1}$, then there exists a one-Kraus CP map
$\Gamma=\mathrm{Ad}_A$ with $\mathrm{Ad}_A\circ\Phi\notin\mathsf{P}_\alpha$.
\end{proposition}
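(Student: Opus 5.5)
The plan is to work entirely at the level of Choi matrices and to "transport" a negative witness for $(k+1)$-positivity onto an $\alpha$-admissible vector using a local operator on the second factor.

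\textbf{Step 1 (the Choi matrix of the composition).} For $\Gamma=\mathrm{Ad}_A$, i.e.\ $\Gamma(Y)=AYA^\ast$ with $A\in\mathbb{M}_m$, the definition of the Choi matrix gives
\[
C_{\mathrm{Ad}_A\circ\Phi}=\sum_{i,j}E_{ij}\otimes A\Phi(E_{ij})A^\ast=(I\otimes A)\,C_\Phi\,(I\otimes A)^\ast .
\]
Consequently, for every $\psi$,
\[
\langle\psi,C_{\mathrm{Ad}_A\circ\Phi}\psi\rangle=\langle (I\otimes A^\ast)\psi,\ C_\Phi\,(I\otimes A^\ast)\psi\rangle .
\]
So it suffices to find $\psi\in\mathcal V_\alpha$ and $B:=A^\ast\in\mathbb{M}_m$ such that $(I\otimes B)\psi$ is a vector on which $C_\Phi$ is negative.

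\textbf{Step 2 (a negative witness of Schmidt rank exactly $k+1$).} Since $\Phi\notin\mathsf P_{k+1}$, there is a unit vector $\varphi$ with $\SR(\varphi)\le k+1$ and $\langle\varphi,C_\Phi\varphi\rangle<0$. If $\SR(\varphi)\le k$, then $\varphi\in\mathcal V_k\subseteq\mathcal V_\alpha$ by Proposition~\ref{prop:strict-inclusions}(a). This would contradict $\Phi\in\mathsf P_\alpha$, so $\SR(\varphi)=k+1$. Write its Schmidt decomposition
\[
\varphi=\sum_{i=1}^{k+1}t_i\,x_i\otimes y_i,\qquad t_i>0 .
\]

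\textbf{Step 3 (the construction).} Choose an orthonormal family $\{f_i\}_{i=1}^{k+1}\subset\mathbb{C}^m$. Define
\[
\psi:=\sum_{i=1}^{k}a\,x_i\otimes f_i+b\,x_{k+1}\otimes f_{k+1},
\]
with $a,b$ as in \eqref{eq:psi-theta}. Exactly as in the proof of Proposition~\ref{prop:strict-inclusions}, $\psi$ is a unit vector with $\SR(\psi)=k+1$ and $s_{k+1}(\psi)=\tfrac{\theta}{k}\sum_{j\le k}s_j(\psi)$, so $\psi\in\mathcal V_\alpha$. Let $B\in\mathbb{M}_m$ be the linear map with
\[
Bf_i=\frac{t_i}{s_i}\,y_i\quad (i\le k+1),\qquad B=0 \text{ on } \mathrm{span}\{f_i\}^\perp,
\]
where $s_i\in\{a,b\}$ are the coefficients of $\psi$; this is well defined because all $s_i>0$. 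Then $(I\otimes B)\psi=\varphi$. Setting $A:=B^\ast$, Step 1 gives
\[
\langle\psi,C_{\mathrm{Ad}_A\circ\Phi}\psi\rangle=\langle\varphi,C_\Phi\varphi\rangle<0,
\]
so $\mathrm{Ad}_A\circ\Phi\notin\mathsf P_\alpha$. The map $\mathrm{Ad}_A$ is CP with a single Kraus operator, and $\mathrm{Ad}_A\circ\Phi$ is still Hermitian-preserving. This proves the ``more precisely'' part.

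\textbf{Step 4 (existence of a suitable $\Phi$).} The first assertion needs some $\Phi\in\mathsf P_\alpha\setminus\mathsf P_{k+1}$. Such a $\Phi$ exists by the strict inclusion $\mathsf P_\alpha\supsetneq\mathsf P_{k+1}$ in Remark~\ref{rem:maps}, which is available because $k\le d-1$. Any element of $\mathsf{BP}_\alpha\setminus\mathsf{BP}_{k+1}$ is Hermitian, and the corresponding map is Hermitian-preserving.

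\textbf{Main obstacle.} There is essentially none. The only delicate point is Step 2: one must rule out $\SR(\varphi)\le k$, so that all $t_i>0$ and the operator $B$ can map the admissible Schmidt profile of $\psi$ onto that of $\varphi$ regardless of how unbalanced $\varphi$'s coefficients are. The argument uses only the local structure $I\otimes B$, so it is independent of the vectorization convention.
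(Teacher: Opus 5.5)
Your proof is correct and follows the paper's argument. You extract a Schmidt-rank-$(k+1)$ negative witness $\varphi$ for $C_\Phi$, then use $C_{\mathrm{Ad}_A\circ\Phi}=(I\otimes A)\,C_\Phi\,(I\otimes A^\ast)$ to pull $\varphi$ back to an $\alpha$-admissible vector; the only difference is that the paper keeps $\varphi$'s own Schmidt basis, attenuates just its last coefficient to land in $\mathcal V_\alpha$, and undoes this with an invertible diagonal $A_t$, whereas you start from the extremal vector $\psi_\theta$ and let $B=A^\ast$ reshape all $k+1$ coefficients at once.
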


\begin{proof}
Since $\mathsf{P}_{k+1}\subsetneq \mathsf{P}_\alpha$ by Remark~\ref{rem:maps},
we may fix $\Phi\in\mathsf{P}_\alpha\setminus \mathsf{P}_{k+1}$.
Let $W:=C_\Phi\in (\mathbb{M}_n\otimes \mathbb{M}_m)^{\mathrm{h}}$ be its Choi matrix.
Then $W\in \mathsf{BP}_\alpha$ but $W\notin \mathsf{BP}_{k+1}$.

\smallskip
\emph{Step 1: a rank-$(k+1)$ witness vector.}
Since $W\notin \mathsf{BP}_{k+1}$, there exists a unit vector $\varphi\in \mathds{C}^n\otimes\mathds{C}^m$
with $\SR(\varphi)\le k+1$ such that
\begin{equation}
\label{eq:neg-on-phi}
\langle \varphi, W\varphi\rangle<0.
\end{equation}
Necessarily $\SR(\varphi)=k+1$ (otherwise $\SR(\varphi)\le k$ would imply $\varphi\in\mathcal{V}_k\subseteq \mathcal{V}_\alpha$,
contradicting $W\in\mathsf{BP}_\alpha$).
Fix a Schmidt decomposition
\[
\varphi=\sum_{i=1}^{k+1} s_i\, e_i\otimes f_i,
\qquad s_1\ge \cdots\ge s_{k+1}>0,
\qquad \sum_{i=1}^{k+1}s_i^2=1,
\]
with orthonormal families $\{e_i\}_{i=1}^{k+1}\subset\mathds{C}^n$ and $\{f_i\}_{i=1}^{k+1}\subset\mathds{C}^m$.
Set $S:=\sum_{i=1}^k s_i>0$.

\smallskip
\emph{Step 2: attenuate the last Schmidt coefficient to enter $\mathcal{V}_\alpha$.}
Choose $t>0$ such that
\begin{equation}
\label{eq:t-choice}
\frac{s_{k+1}}{t}\ \le\ \frac{\theta}{k}\,S.
\end{equation}
Define $A_t\in \mathbb{M}_m$ by
\[
A_t f_i=f_i\ (1\le i\le k),\qquad A_t f_{k+1}=t\,f_{k+1},
\qquad\text{and}\qquad A_t|_{\mathrm{span}\{f_1,\dots,f_{k+1}\}^\perp}=I,
\]
and let $\Gamma_t:=\mathrm{Ad}_{A_t}$, which is completely positive.

Now define the (unnormalized) vector
\[
\widetilde{\psi}_t:=\sum_{i=1}^{k} s_i\,e_i\otimes f_i\;+\;\frac{s_{k+1}}{t}\,e_{k+1}\otimes f_{k+1},
\qquad
\psi_t:=\frac{\widetilde{\psi}_t}{\|\widetilde{\psi}_t\|}.
\]
Then $\SR(\psi_t)=k+1$, and $\psi_t$ is already in Schmidt form with Schmidt coefficients
\[
s_i(\psi_t)=\frac{s_i}{\|\widetilde{\psi}_t\|}\ (1\le i\le k),
\qquad
s_{k+1}(\psi_t)=\frac{s_{k+1}/t}{\|\widetilde{\psi}_t\|}.
\]
Hence, using \eqref{eq:t-choice},
\[
s_{k+1}(\psi_t)
=\frac{s_{k+1}/t}{\|\widetilde{\psi}_t\|}
\le
\frac{\theta}{k}\,\frac{\sum_{i=1}^k s_i}{\|\widetilde{\psi}_t\|}
=
\frac{\theta}{k}\sum_{i=1}^k s_i(\psi_t).
\]
Therefore $\psi_t\in\mathcal{V}_\alpha$ by Definition~\ref{def:alpha-admissible}.
Moreover,
\begin{align}\label{q1}
(I\otimes A_t^\ast)\widetilde{\psi}_t=\varphi,
\qquad\text{hence}\qquad
(I\otimes A_t^\ast)\psi_t=\frac{1}{\|\widetilde{\psi}_t\|}\,\varphi.
\end{align}

\smallskip
\emph{Step 3: post-compose and test.}
With a direct computation on the Choi matrix, we see that
\begin{align}\label{choi-post-compos}
C_{\Gamma_t\circ\Phi}=(I\otimes \Gamma_t)(W)=(I\otimes A_t)\,W\,(I\otimes A_t^\ast).
\end{align}
Therefore, using \eqref{q1} and \eqref{choi-post-compos} we get
\[
\langle \psi_t,\, C_{\Gamma_t\circ\Phi}\,\psi_t\rangle
=
\langle (I\otimes A_t^\ast)\psi_t,\ W\,(I\otimes A_t^\ast)\psi_t\rangle
=
\frac{1}{\|\widetilde{\psi}_t\|^2}\,\langle \varphi,W\varphi\rangle.
\]
By \eqref{eq:neg-on-phi}, the right-hand side is strictly negative. Since $\psi_t\in\mathcal{V}_\alpha$,
this shows $C_{\Gamma_t\circ\Phi}\notin \mathsf{BP}_\alpha$, i.e.\ $\Gamma_t\circ\Phi\notin\mathsf{P}_\alpha$.

Thus CP post-composition can destroy $\alpha$-positivity for $\alpha\in(k,k+1)$.
\end{proof}

\begin{remark}
For integer $\alpha=k$, the cone $\mathsf{P}_k$ coincides with $k$-positivity and is stable under CP pre- and post-composition.
The above phenomenon is therefore genuinely fractional: it occurs only for non-integer $\alpha$.
\end{remark}


\subsection{\texorpdfstring{A sharp threshold example: $\alpha$-positivity of $\Phi_t(X)=\Tr(X)I-tX$}{A sharp threshold example: alpha-positivity of Phi_t}}
\label{subsec:flagship-depolarizing}

In this subsection we compute the $\alpha$-positivity region for the unitarily covariant one-parameter family
\begin{equation}
\label{eq:Phi-t}
\Phi_t:\mathbb{M}_d\to \mathbb{M}_d,\qquad \Phi_t(X):=\Tr(X)\,I_d-tX,
\end{equation}
which is Hermitian-preserving for all $t\in\mathds{R}$.
Note that for $t\le 0$ we have $C_{\Phi_t}=I\otimes I+|t|d\,\omega\omega^*\geq 0$, hence $\Phi_t$ is completely positive
(and thus $\alpha$-positive for every $\alpha$). Therefore the only nontrivial regime is $t\ge 0$, which we assume below.

Let $\{e_i\}_{i=1}^d$ be the standard basis of $\mathds{C}^d$ and define
\[
\omega:=\frac{1}{\sqrt d}\sum_{i=1}^d e_i\otimes e_i\ \in\ \mathds{C}^d\otimes\mathds{C}^d.
\]
It is known that the Choi matrix of   $\Phi_t$ is $C_{\Phi_t}=I_d\otimes I_d-td\,\omega\omega^\ast.$ This is  a standard computation in the Choi/Jamio{\l}kowski representation; see, e.g.,
\cite[Sec.~2.2.2]{WatrousTQI} (and the original paper \cite{Choi75}).

\begin{lemma}
\label{lem:omega-overlap-max}
Let $x\in\mathds{C}^d\otimes\mathds{C}^d$ and write $X=\mathrm{mat}(x)\in \mathbb{M}_d$, so that $x=\mathrm{vec}(X)$.
Then for all unitaries $U,V\in \mathbb{M}_d$,
\begin{equation}
\label{eq:omega-overlap-trace}
\langle \omega,(U\otimes V)x\rangle = \frac{1}{\sqrt d}\,\Tr\!\big(V X U^{\mathsf T}\big).
\end{equation}
Consequently,
\begin{equation}
\label{eq:omega-overlap-max}
\max_{U,V\ \mathrm{unitary}} |\langle \omega,(U\otimes V)x\rangle|
= \frac{1}{\sqrt d}\,\|X\|_1,
\end{equation}
where $\|\cdot\|_1$ denotes the Schatten $1$-norm (trace norm).
\end{lemma}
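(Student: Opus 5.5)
The plan is to reduce everything to the vectorization identity \eqref{eq:vec-AXB} and then to the variational characterization of the trace norm. First I will observe that $\omega=\frac{1}{\sqrt d}\sum_i e_i\otimes e_i=\frac{1}{\sqrt d}\,\mathrm{vec}(I_d)$. This holds under the column-stacking convention, and in fact under either ordering convention, because $I_d$ is symmetric. Next, applying \eqref{eq:vec-AXB} with $A=V$ and $B=U^{\mathsf T}$ gives $(U\otimes V)\mathrm{vec}(X)=\big((U^{\mathsf T})^{\mathsf T}\otimes V\big)\mathrm{vec}(X)=\mathrm{vec}(VXU^{\mathsf T})$.

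Then I will use that $\mathrm{vec}$ is an isometry from the Frobenius (Hilbert--Schmidt) inner product to the Euclidean one, as recorded in \eqref{Frobs}. By polarization, $\langle \mathrm{vec}(Y),\mathrm{vec}(Z)\rangle=\Tr(Y^\ast Z)$. With $Y=I_d$ and $Z=VXU^{\mathsf T}$ this gives
\[
\langle\omega,(U\otimes V)x\rangle=\frac{1}{\sqrt d}\,\Tr(VXU^{\mathsf T}),
\]
which is \eqref{eq:omega-overlap-trace}.

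For \eqref{eq:omega-overlap-max}, cyclicity gives $\Tr(VXU^{\mathsf T})=\Tr(X\,U^{\mathsf T}V)$. As $U,V$ range over the unitaries, $W:=U^{\mathsf T}V$ ranges over all unitaries; for instance take $U=I$ and $V=W$. So it suffices to show that $\max_{W\ \mathrm{unitary}}|\Tr(XW)|=\|X\|_1$.

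\begin{itemize}
\item \emph{Upper bound.} Write a singular value decomposition $X=P\Sigma Q^\ast$. Then $\Tr(XW)=\Tr(\Sigma\, Q^\ast W P)=\sum_j\sigma_j(X)\,(Q^\ast WP)_{jj}$. The diagonal entries of a unitary have modulus at most $1$, so $|\Tr(XW)|\le\sum_j\sigma_j(X)=\|X\|_1$. Equivalently, this is the von Neumann trace inequality.
\item \emph{Attainment.} Taking $W=QP^\ast$ gives $\Tr(XW)=\Tr(\Sigma)=\|X\|_1$.
\end{itemize}

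Dividing by $\sqrt d$ yields \eqref{eq:omega-overlap-max}.

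The only delicate point is bookkeeping of conventions. The paper defines $\mathrm{mat}$ once via row-index coefficients $a_{ij}$ and once as the inverse of column-stacking $\mathrm{vec}$. I will consistently use the characterization \eqref{eq:vec-AXB}, noting that the conclusion depends only on $\mathrm{vec}(I_d)=\sqrt d\,\omega$ and on the isometry property. If the other convention were used, $X$ would be replaced by $X^{\mathsf T}$, which has the same singular values, so \eqref{eq:omega-overlap-max} is unaffected.
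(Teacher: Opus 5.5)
Your proof is correct and follows the paper's argument: identify $\omega=\frac{1}{\sqrt d}\mathrm{vec}(I_d)$, apply \eqref{eq:vec-AXB} with $A=V$, $B=U^{\mathsf T}$, use $\langle\mathrm{vec}(A),\mathrm{vec}(B)\rangle=\Tr(A^\ast B)$, and conclude by trace-norm duality. The only differences are that you prove the duality directly via an SVD, after reducing to a single unitary $W=U^{\mathsf T}V$, where the paper cites von Neumann's inequality; your remark is also accurate that the row-index $\mathrm{mat}$ of the preliminaries would put $X^{\mathsf T}$ in place of $X$ in \eqref{eq:omega-overlap-trace} while leaving \eqref{eq:omega-overlap-max} unchanged.
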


\begin{proof}
Using \eqref{eq:vec-AXB} with $A=V$ and $B=U^{\mathsf T}$ gives
\[
(U\otimes V)\mathrm{vec}(X)=\mathrm{vec}(V X U^{\mathsf T}).
\]
Also $\omega=\frac{1}{\sqrt d}\mathrm{vec}(I_d)$ and
$$\langle \mathrm{vec}(A),\mathrm{vec}(B)\rangle=\mathrm{vec}(A)^*\mathrm{vec}(B)=\sum_{i,j}\overline{a_{ij}}b_{ij}=\Tr(A^\ast B).$$
Thus
\[
\langle \omega,(U\otimes V)x\rangle
=\frac{1}{\sqrt d}\,\langle \mathrm{vec}(I_d),\mathrm{vec}(V X U^{\mathsf T})\rangle
=\frac{1}{\sqrt d}\,\Tr(V X U^{\mathsf T}),
\]
proving \eqref{eq:omega-overlap-trace}. Finally, as $U$ ranges over unitaries so does $U^{\mathsf T}$, and the trace norm duality
(von Neumann trace inequality) yields
\[
\max_{U,V\ \mathrm{unitary}}|\Tr(V X U^{\mathsf T})|=\|X\|_1,
\]
giving \eqref{eq:omega-overlap-max}.
\end{proof}

\begin{lemma}
\label{lem:max-trace-norm}
Fix $d\ge 2$, $k\in\{1,\dots,d-1\}$, and $\theta\in[0,1)$.
Let $\sigma_1\ge\cdots\ge\sigma_d\ge 0$ satisfy
\[
\sigma_{k+2}=\cdots=\sigma_d=0,\qquad \sum_{j=1}^{k+1}\sigma_j^2=1.
\]
If $\theta=0$, assume $\sigma_{k+1}=0$ (so $\sum_{j=1}^{k}\sigma_j^2=1$).
If $\theta>0$, assume
\begin{equation}
\label{eq:ratio-constraint-sigma}
\sigma_{k+1}\ \le\ \frac{\theta}{k}\sum_{j=1}^k \sigma_j.
\end{equation}
Then
\begin{equation}
\label{eq:max-sum-sigma}
\sum_{j=1}^{k+1}\sigma_j\ \le\ \frac{k+\theta}{\sqrt{k+\theta^2}}.
\end{equation}
Moreover, equality holds if and only if
\[
\sigma_1=\cdots=\sigma_k=\frac{1}{\sqrt{k+\theta^2}}
\qquad\text{and}\qquad
\sigma_{k+1}=\frac{\theta}{\sqrt{k+\theta^2}}
\]
(with the convention $\sigma_{k+1}=0$ when $\theta=0$).
\end{lemma}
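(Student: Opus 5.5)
Write $S:=\sum_{j=1}^k\sigma_j$ and $b:=\sigma_{k+1}$. The plan is to reduce the problem to two real variables and then run a one-variable optimization.

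\emph{Case $\theta=0$.} Here $b=0$, and Cauchy--Schwarz gives $S\le\sqrt{k}\,\big(\sum_{j\le k}\sigma_j^2\big)^{1/2}=\sqrt k$. This is exactly the bound $k/\sqrt{k}$. Equality in Cauchy--Schwarz holds if and only if all $\sigma_j$ with $j\le k$ are equal, which gives the stated equality case.

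\emph{Case $\theta>0$: reduction to two variables.} Cauchy--Schwarz on the first $k$ coordinates gives $S^2\le k\sum_{j\le k}\sigma_j^2=k(1-b^2)$, with equality iff $\sigma_1=\cdots=\sigma_k$. It therefore suffices to maximize $S+b$ over pairs $(S,b)$ satisfying
\[
b\ge 0,\qquad S^2+kb^2\le k,\qquad b\le \tfrac{\theta}{k}S.
\]
The first constraint region is the ellipse $\{S^2/k+b^2\le 1\}$, cut down to the wedge $0\le b\le (\theta/k)S$.

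\emph{Case $\theta>0$: the optimization.} Drop the wedge constraint for a moment and maximize the linear function $S+b$ on the ellipse. By Lagrange multipliers the unconstrained maximizer is $(S,b)\propto(k,1)$, i.e.\ $b=S/k$. Since $\theta<1$, this point lies outside the wedge, because $S/k>(\theta/k)S$. A linear function is being maximized on a convex set, and the ellipse optimum violates the wedge. Hence the constrained optimum lies on the ray $b=(\theta/k)S$, which is the only wedge boundary through which the optimum can escape.

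On that ray, $S^2(1+\theta^2/k)\le k$ gives
\[
S\le \frac{k}{\sqrt{k+\theta^2}},
\]
and therefore
\[
S+b=S\Big(1+\frac{\theta}{k}\Big)\le \frac{k+\theta}{\sqrt{k+\theta^2}}.
\]

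To make this rigorous without appealing to geometric intuition, I would instead use a single weighted Cauchy--Schwarz step. Write
\[
S+b=\lambda S+\mu\Big(\tfrac{\theta}{k}S-b\Big)\cdot(-1)+\cdots
\]
or, more simply, maximize $f(b)=\sqrt{k(1-b^2)}+b$ over $0\le b\le b^*$, where $b^*$ is the endpoint fixed by the wedge. The function $f$ is concave and increasing while $b<1/\sqrt{k+1}$. The key check is that $b^*=\theta/\sqrt{k+\theta^2}<1/\sqrt{k+1}$ when $\theta<1$. The wedge constraint, combined with $S\le\sqrt{k(1-b^2)}$, forces $b\le b^*$: indeed $b\le(\theta/k)\sqrt{k(1-b^2)}$ gives $b^2(k+\theta^2)\le\theta^2$. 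Hence $f(b)\le f(b^*)$, which equals the stated value.

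\emph{Equality and the main obstacle.} Equality requires $b=b^*$, because $f$ is strictly increasing on $[0,b^*]$, together with equality in Cauchy--Schwarz, i.e.\ $\sigma_1=\cdots=\sigma_k=1/\sqrt{k+\theta^2}$. The main point needing care is the monotonicity check $b^*<1/\sqrt{k+1}$, equivalently $\theta^2(k+1)<k+\theta^2$, i.e.\ $\theta^2<1$. This holds because $\theta<1$. For the converse direction of the equality case, one verifies directly that the stated vector attains the bound, exactly as computed for $\psi_\theta$ in Proposition~\ref{prop:strict-inclusions}.
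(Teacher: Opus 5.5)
Your final argument is correct and is essentially the paper's proof: Cauchy--Schwarz gives $S\le\sqrt{k(1-b^2)}$, the ratio constraint then forces $b\le\theta/\sqrt{k+\theta^2}$, and $f(b)=\sqrt{k(1-b^2)}+b$ is increasing on that interval because $\theta^2<1$. Your equality argument via strict monotonicity of $f$ plus equality in Cauchy--Schwarz is, if anything, slightly cleaner than the paper's. The Lagrange/ellipse--wedge heuristic and the unfinished weighted identity for $S+b$ contribute nothing to the proof and should simply be deleted.
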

\begin{proof}
Set
\[
S:=\sum_{j=1}^k \sigma_j,\qquad b:=\sigma_{k+1}.
\]
Then $\sum_{j=1}^{k}\sigma_j^2 = 1-b^2$ by the normalization
$\sum_{j=1}^{k+1}\sigma_j^2=1$.

\medskip
\noindent\emph{Step 1: Cauchy--Schwarz bound for $S$.}
By the Cauchy--Schwarz inequality we have
\begin{equation}\label{eq:CS-detailed}
S^2=\Big(\sum_{j=1}^k \sigma_j\Big)^2
\le k\sum_{j=1}^k \sigma_j^2
= k(1-b^2).
\end{equation}
Hence
\begin{equation}\label{eq:S-CS}
S \le \sqrt{k(1-b^2)}.
\end{equation}
Moreover, equality holds in \eqref{eq:CS-detailed} if and only if
\begin{equation}\label{eq:equal-sigmas}
\sigma_1=\cdots=\sigma_k.
\end{equation}

\medskip
\noindent\emph{Step 2: the case $\theta=0$.}
In this case the constraint \eqref{eq:ratio-constraint-sigma} becomes $\sigma_{k+1}\le 0$, hence $b=\sigma_{k+1}=0$.
Then \eqref{eq:S-CS} gives $S\le \sqrt{k}$ and therefore
\[
\sum_{j=1}^{k+1}\sigma_j = S \le \sqrt{k}
= \frac{k}{\sqrt{k}}
=\frac{k+\theta}{\sqrt{k+\theta^2}},
\]
which is \eqref{eq:max-sum-sigma}.
Equality holds if and only if \eqref{eq:equal-sigmas} holds and
$\sum_{j=1}^k \sigma_j^2=1$, i.e. $\sigma_1=\cdots=\sigma_k=1/\sqrt{k}$ and $b=0$.

\medskip
\noindent\emph{Step 3: assume $\theta\in(0,1)$.}
The ratio constraint \eqref{eq:ratio-constraint-sigma} gives $b \le \frac{\theta}{k}S$.
Combining with \eqref{eq:S-CS} yields
\[
b \le \frac{\theta}{k}\sqrt{k(1-b^2)}
=\frac{\theta}{\sqrt{k}}\sqrt{1-b^2}.
\]
Since both sides are nonnegative, we may square to obtain
\[
b^2 \le \frac{\theta^2}{k}(1-b^2).
\]
Rearranging gives
\[
\Big(1+\frac{\theta^2}{k}\Big)b^2 \le \frac{\theta^2}{k}
\qquad\Longrightarrow\qquad
b^2 \le \frac{\theta^2}{k+\theta^2}.
\]
Equivalently,
\begin{equation}\label{eq:bmax}
0\le b \le b_{\max}:=\frac{\theta}{\sqrt{k+\theta^2}}.
\end{equation}

\medskip
\noindent\emph{Step 4: reduce to a one-variable maximization.}
We want to bound
\[
\sum_{j=1}^{k+1}\sigma_j = S+b.
\]
For each fixed $b$, the quantity $S=\sum_{j=1}^k \sigma_j$ is maximized under the constraint
$\sum_{j=1}^k \sigma_j^2 = 1-b^2$ exactly when $\sigma_1=\cdots=\sigma_k$
(by equality in Cauchy--Schwarz), and the maximal value is $\sqrt{k(1-b^2)}$.
Therefore, for every admissible choice of $(\sigma_1,\dots,\sigma_{k+1})$,
\begin{equation}\label{eq:reduce-f}
S+b \le \sqrt{k(1-b^2)} + b =: f(b).
\end{equation}
Moreover, by \eqref{eq:bmax}, we only need to consider $b\in[0,b_{\max}]$.

\medskip
\noindent\emph{Step 5: show $f$ is increasing on $[0,b_{\max}]$.}
Compute, for $b\in(0,1)$,
\[
f'(b)=\frac{d}{db}\big(\sqrt{k(1-b^2)}+b\big)
= -\frac{\sqrt{k}\,b}{\sqrt{1-b^2}} + 1
= 1-\frac{\sqrt{k}\,b}{\sqrt{1-b^2}}.
\]
Thus $f'(b)\ge 0$ is equivalent to
\[
\frac{\sqrt{k}\,b}{\sqrt{1-b^2}} \le 1
\quad\Longleftrightarrow\quad
k b^2 \le 1-b^2
\quad\Longleftrightarrow\quad
b^2 \le \frac{1}{k+1}.
\]
Now, since $\theta\in(0,1)$, we have $\theta^2\le 1$, hence
\[
b_{\max}^2=\frac{\theta^2}{k+\theta^2}\le \frac{1}{k+1}.
\]
(Actually $\theta^2\leq 1$ if and only if $\frac{\theta^2}{k+\theta^2}\le \frac{1}{k+1}$). Therefore $f'(b)\ge 0$ for all $b\in[0,b_{\max}]$, i.e. $f$ is increasing on this interval.

Consequently, the maximum of $f(b)$ on $[0,b_{\max}]$ is attained at $b=b_{\max}$.

\medskip
\noindent\emph{Step 6: evaluate $f(b_{\max})$.}
Using $b_{\max}=\theta/\sqrt{k+\theta^2}$, we compute
\[
1-b_{\max}^2 = 1-\frac{\theta^2}{k+\theta^2}=\frac{k}{k+\theta^2}.
\]
Hence
\[
\begin{aligned}
f(b_{\max})
&=\sqrt{k(1-b_{\max}^2)}+b_{\max}\\
&=\sqrt{k\cdot\frac{k}{k+\theta^2}}+\frac{\theta}{\sqrt{k+\theta^2}}\\
&=\frac{k+\theta}{\sqrt{k+\theta^2}}.
\end{aligned}
\]
Combining with \eqref{eq:reduce-f} gives
\[
\sum_{j=1}^{k+1}\sigma_j = S+b \le f(b) \le f(b_{\max})
=\frac{k+\theta}{\sqrt{k+\theta^2}},
\]
which is exactly \eqref{eq:max-sum-sigma}.

\medskip
\noindent\emph{Step 7: equality conditions.}
Assume equality holds in \eqref{eq:max-sum-sigma}. Tracing back through the proof,
we must have equality in each of the following:
\begin{enumerate}
\item equality in Cauchy--Schwarz \eqref{eq:CS-detailed}, hence $\sigma_1=\cdots=\sigma_k=:a$;
\item tightness of the ratio constraint, i.e. $b=(\theta/k)S$;
\item attainment at the endpoint $b=b_{\max}$ (equivalently, the bound \eqref{eq:bmax} is tight).
\end{enumerate}
With $\sigma_1=\cdots=\sigma_k=a$, we have $S=ka$ and the tight ratio constraint gives $b=\theta a$.
The normalization $\sum_{j=1}^{k+1}\sigma_j^2=1$ becomes
\[
ka^2 + (\theta a)^2 = (k+\theta^2)a^2 = 1,
\]
so $a=1/\sqrt{k+\theta^2}$ and $b=\theta/\sqrt{k+\theta^2}$, i.e.
\[
\sigma_1=\cdots=\sigma_k=\frac{1}{\sqrt{k+\theta^2}},
\qquad
\sigma_{k+1}=\frac{\theta}{\sqrt{k+\theta^2}}.
\]
Conversely, these values satisfy all hypotheses and make every inequality above an equality,
so they indeed achieve equality in \eqref{eq:max-sum-sigma}.
\end{proof}


\begin{theorem}[Sharp $\alpha$-positivity threshold for $\Phi_t$]
\label{thm:alpha-threshold-Phi-t}
Let $d\ge 2$ and $\Phi_t:\mathbb{M}_d\to \mathbb{M}_d$ be given by \eqref{eq:Phi-t}.
Fix $\alpha\in[1,d]$ and write $k:=\lfloor\alpha\rfloor$ and $\theta:=\alpha-k\in[0,1)$.
Then
\begin{equation}
\label{eq:threshold}
\Phi_t\in \mathsf{P}_\alpha
\quad\Longleftrightarrow\quad
t\ \le\ t_\alpha^\ast:=\frac{k+\theta^2}{(k+\theta)^2}.
\end{equation}
Equivalently,
\begin{equation}
\label{eq:lambda-explicit}
\lambda_\alpha(C_{\Phi_t}) \;=\; 1 - t\,\frac{(k+\theta)^2}{k+\theta^2},
\end{equation}
so $\Phi_t$ is $\alpha$-positive if and only if $\lambda_\alpha(C_{\Phi_t})\ge 0$.

Moreover, the threshold is sharp: if $t>t_\alpha^\ast$, then there exists $x\in\mathcal{V}_\alpha$ such that
$\langle x,C_{\Phi_t}x\rangle<0$.
If $\alpha\in(k,k+1)$, one may take $x$ with Schmidt coefficients
\[
\underbrace{\frac{1}{\sqrt{k+\theta^2}},\dots,\frac{1}{\sqrt{k+\theta^2}}}_{k\ \text{times}},
\qquad
\frac{\theta}{\sqrt{k+\theta^2}},
\qquad
0,\dots,0.
\]
If $\alpha=d$, one may take $x=\omega$.
\end{theorem}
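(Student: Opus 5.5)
The plan is to compute $\lambda_\alpha(C_{\Phi_t})$ exactly and then read off the threshold. Using $C_{\Phi_t}=I\otimes I-td\,\omega\omega^\ast$, every unit $x$ satisfies
\[
\langle x,C_{\Phi_t}x\rangle = 1-td\,|\langle\omega,x\rangle|^2 .
\]
Since $t\ge 0$, minimizing over $\mathcal V_\alpha$ is the same as maximizing $|\langle\omega,x\rangle|^2$ over $\mathcal V_\alpha$. This gives
\[
\lambda_\alpha(C_{\Phi_t})=1-td\,M_\alpha^2,\qquad M_\alpha:=\max_{x\in\mathcal V_\alpha}|\langle\omega,x\rangle|,
\]
and the maximum exists by Proposition~\ref{prop:compact-lambda}(a). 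Everything therefore reduces to showing $M_\alpha^2=(k+\theta)^2/\big(d(k+\theta^2)\big)$. Once that is done, \eqref{eq:lambda-explicit} follows, and Proposition~\ref{prop:compact-lambda}(c) turns it into \eqref{eq:threshold}.

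For the upper bound, write $X=\mathrm{mat}(x)$. Taking $U=V=I$ in Lemma~\ref{lem:omega-overlap-max} gives $|\langle\omega,x\rangle|\le \|X\|_1/\sqrt d$. The singular values of $X$ are the Schmidt coefficients of $x$. If $\alpha<d$, so that $k\le d-1$, they satisfy exactly the hypotheses of Lemma~\ref{lem:max-trace-norm}: at most $k+1$ of them are nonzero, their squares sum to $1$, and the ratio constraint \eqref{normaliz} holds (or $\sigma_{k+1}=0$ when $\theta=0$). That lemma then gives $\|X\|_1\le (k+\theta)/\sqrt{k+\theta^2}$. The endpoint $\alpha=d$ ($k=d$, $\theta=0$) falls outside the lemma's range and needs a separate treatment. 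There Cauchy--Schwarz gives $\|X\|_1\le\sqrt d\,\|X\|_F=\sqrt d$, which matches the formula $k/\sqrt k$.

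For the lower bound (sharpness), I will take $x$ in the diagonal form $x=\sum_i \sigma_i e_i\otimes e_i$ in the standard basis, with the extremal coefficients from the equality case of Lemma~\ref{lem:max-trace-norm}. In the integer case $\theta=0$ this means $\sigma_1=\cdots=\sigma_k=1/\sqrt k$, and for $\alpha=d$ it gives $x=\omega$. This $x$ lies in $\mathcal V_\alpha$, as checked in Proposition~\ref{prop:strict-inclusions}(a) for the fractional case, and it is immediate when $\theta=0$. Also $\mathrm{mat}(x)$ is diagonal with nonnegative entries, so
\[
\langle\omega,x\rangle=\frac{1}{\sqrt d}\sum_i\sigma_i=\frac{1}{\sqrt d}\cdot\frac{k+\theta}{\sqrt{k+\theta^2}},
\]
and the upper bound is attained. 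This same vector is the sharpness witness in the statement: for $t>t^\ast_\alpha$ it gives $\langle x,C_{\Phi_t}x\rangle=1-t/t^\ast_\alpha<0$.

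I expect no serious obstacle, because the heavy optimization is already done in Lemma~\ref{lem:max-trace-norm}. The points that need care are bookkeeping: the boundary case $\alpha=d$, which the lemma does not cover; confirming that the normalization $\omega=\mathrm{vec}(I)/\sqrt d$ makes the factor $d$ cancel to give exactly $t(k+\theta)^2/(k+\theta^2)$; and remembering that for $t<0$ the map is CP, so the equivalence is only asserted in the regime $t\ge 0$, where the minimization reduces to the overlap maximization.
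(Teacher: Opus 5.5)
Your proposal is correct and takes essentially the paper's route. You reduce to $\lambda_\alpha(C_{\Phi_t})=1-td\max_{x\in\mathcal V_\alpha}|\langle\omega,x\rangle|^2$, bound the overlap by $\|\mathrm{mat}(x)\|_1/\sqrt d$ via Lemma~\ref{lem:omega-overlap-max}, apply Lemma~\ref{lem:max-trace-norm} for $\alpha<d$ (with $\alpha=d$ handled separately by Cauchy--Schwarz), and attain the bound with the diagonal extremal vector; the only cosmetic difference is that you get the upper bound directly from $|\Tr X|\le\|X\|_1$ plus explicit attainment, whereas the paper first uses local-unitary invariance of $\mathcal V_\alpha$ to rewrite the maximum as an equality.
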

\begin{proof}
Let $\omega=\frac{1}{\sqrt d}\mathrm{vec}(I_d).$  As   $C_{\Phi_t}=I_d\otimes I_d-td\,\omega\omega^\ast$,  for any unit vector $x$ we have
\begin{equation}\label{eq:test-again}
\langle x,\,C_{\Phi_t}x\rangle \;=\; 1-td\,|\langle \omega,x\rangle|^2.
\end{equation}
Recalling the definition \eqref{eq:lambda-alpha},
\[
\lambda_\alpha(C_{\Phi_t})
=\min_{x\in\mathcal{V}_\alpha}\langle x,C_{\Phi_t}x\rangle,
\]
we obtain from \eqref{eq:test-again} that
\begin{align}
\lambda_\alpha(C_{\Phi_t})
=\min_{x\in\mathcal{V}_\alpha}\big(1-td\,|\langle \omega,x\rangle|^2\big)=1-td\;\max_{x\in\mathcal{V}_\alpha}|\langle \omega,x\rangle|^2=1-td\,m_\alpha,
\label{eq:lambda-reduction}
\end{align}
where we set
\[
m_\alpha \;:=\;\max_{x\in\mathcal{V}_\alpha}|\langle \omega,x\rangle|^2.
\]
Thus it remains to compute $m_\alpha$.

\smallskip
\noindent\emph{Case 1: $\alpha=d$.}
Here $\mathcal{V}_d$ is the full unit sphere (since $\theta=0$ and the constraints in
Definition~\ref{def:alpha-admissible} are vacuous). By the Cauchy--Schwarz inequality we have
$|\langle \omega,x\rangle|\le \|\omega\|\,\|x\|=1$ for all unit $x$, and equality is attained at $x=\omega$.
Hence $m_d=1$ and therefore
\[
\lambda_d(C_{\Phi_t})=1-td.
\]
Thus $\lambda_d(C_{\Phi_t})\ge 0$ if and only if $t\le 1/d$, i.e.\ \eqref{eq:threshold} holds for $\alpha=d$.
Sharpness is immediate: if $t>1/d$, then $\langle \omega,C_{\Phi_t}\omega\rangle=1-td<0$.

\smallskip
\noindent\emph{Case 2: $\alpha\in[1,d)$ (so $k:=\lfloor\alpha\rfloor\in\{1,\dots,d-1\}$ and $\theta:=\alpha-k\in[0,1)$).}
We compute $m_\alpha$.

\smallskip
\noindent\emph{Step 1: reduce the maximization to a trace-norm problem.}
Fix $x\in\mathds{C}^d\otimes\mathds{C}^d$ and write $X=\mathrm{mat}(x)\in \mathbb{M}_d$ so that $x=\mathrm{vec}(X)$.
By Lemma~\ref{lem:omega-overlap-max},
\[
\max_{U,V\ \mathrm{unitary}}|\langle \omega,(U\otimes V)x\rangle|
=\frac{1}{\sqrt d}\,\|X\|_1.
\]
Now note that $\mathcal{V}_\alpha$ is invariant under local unitaries:
if $x\in\mathcal{V}_\alpha$ and $U,V$ are unitaries, then $(U\otimes V)x$ has the same Schmidt coefficients as $x$,
hence $(U\otimes V)x\in\mathcal{V}_\alpha$ (by Definition~\ref{def:alpha-admissible}).
Therefore,
\begin{align*}
\max_{y\in\mathcal{V}_\alpha}|\langle \omega,y\rangle|
&=\max_{x\in\mathcal{V}_\alpha}\;\max_{U,V\ \mathrm{unitary}}|\langle \omega,(U\otimes V)x\rangle|\\
&=\max_{x\in\mathcal{V}_\alpha}\;\frac{1}{\sqrt d}\,\|\mathrm{mat}(x)\|_1,
\end{align*}
and consequently
\begin{equation}\label{eq:Malpha-tracenorm}
m_\alpha
=\frac{1}{d}\Big(\max_{x\in\mathcal{V}_\alpha}\|\mathrm{mat}(x)\|_1\Big)^2.
\end{equation}

\smallskip
\noindent\emph{Step 2: express the constraint $x\in\mathcal{V}_\alpha$ in terms of singular values.}
Since $\mathrm{mat}(\cdot)$ is an isometry between the Hilbert space norm on $\mathds{C}^d\otimes\mathds{C}^d$
and the Frobenius norm on $\mathbb{M}_d$ (\eqref{Frobs}), we have
\[
\|x\|=1 \quad\Longleftrightarrow\quad \|X\|_2=1,
\]
where $\|\cdot\|_2$ is the Frobenius norm.
Moreover, the Schmidt coefficients of $x$ coincide with the singular values of $X$:
\[
s_j(x)=\sigma_j(X)\qquad (j=1,\dots,d).
\]
Hence the conditions $x\in\mathcal{V}_\alpha$ in Definition~\ref{def:alpha-admissible} translate exactly into:
\[
\sigma_{k+2}(X)=\cdots=\sigma_d(X)=0,\qquad \sum_{j=1}^{k+1}\sigma_j(X)^2=\|X\|_2^2=1,
\]
and, when $\theta>0$,
\[
\sigma_{k+1}(X)\le \frac{\theta}{k}\sum_{j=1}^k \sigma_j(X).
\]
Finally, $\|X\|_1=\sum_{j=1}^d \sigma_j(X)=\sum_{j=1}^{k+1}\sigma_j(X)$ under the rank ceiling.

\smallskip
\noindent\emph{Step 3: maximize $\|X\|_1$ under the fractional constraint.}
By Lemma~\ref{lem:max-trace-norm},
\[
\|X\|_1=\sum_{j=1}^{k+1}\sigma_j(X)\ \le\ \frac{k+\theta}{\sqrt{k+\theta^2}}.
\]
Therefore \eqref{eq:Malpha-tracenorm} yields
\[
m_\alpha
=\frac{1}{d}\left(\frac{k+\theta}{\sqrt{k+\theta^2}}\right)^2
=\frac{(k+\theta)^2}{d\,(k+\theta^2)}.
\]
Plugging this into $\lambda_\alpha(C_{\Phi_t})=1-td\,m_\alpha$ gives
\[
\lambda_\alpha(C_{\Phi_t})
=1-td\cdot \frac{(k+\theta)^2}{d\,(k+\theta^2)}
=1-t\,\frac{(k+\theta)^2}{k+\theta^2},
\]
which is \eqref{eq:lambda-explicit}. In particular,
\[
\lambda_\alpha(C_{\Phi_t})\ge 0
\quad\Longleftrightarrow\quad
t\le \frac{k+\theta^2}{(k+\theta)^2}
=:t_\alpha^\ast,
\]
proving \eqref{eq:threshold} for $\alpha\in[1,d)$ as well.

\smallskip
\noindent\emph{Sharpness.}
Assume first that $\alpha\in(k,k+1)$ (so $\theta\in(0,1)$).
Let $\sigma_1=\cdots=\sigma_k=\frac{1}{\sqrt{k+\theta^2}}$ and
$\sigma_{k+1}=\frac{\theta}{\sqrt{k+\theta^2}}$, and set
\[
X:=\mathrm{diag}(\sigma_1,\dots,\sigma_{k+1},0,\dots,0)\in \mathbb{M}_d,
\qquad x:=\mathrm{vec}(X).
\]
Then $\|x\|=\|X\|_2=1$, the singular values of $X$ are exactly the above $\sigma_j$,
and the ratio constraint holds with equality, hence $x\in\mathcal{V}_\alpha$.
Moreover, for this diagonal positive $X$ we have $\|X\|_1=\Tr(X)=\sum_{j=1}^{k+1}\sigma_j$ and
\[
\langle \omega,x\rangle
=\frac{1}{\sqrt d}\Tr(X)
=\frac{1}{\sqrt d}\|X\|_1,
\]
so $|\langle \omega,x\rangle|^2=m_\alpha$ (indeed, $\|X\|_1$ attains the maximum in Lemma~\ref{lem:max-trace-norm}).
Hence, by \eqref{eq:test-again},
\[
\langle x,C_{\Phi_t}x\rangle = 1-td\,m_\alpha<0
\qquad\text{whenever } t>t_\alpha^\ast.
\]
This proves sharpness for $\alpha\in(k,k+1)$. For $\alpha=d$ sharpness was shown in Case~1
(using $x=\omega$). If $\theta=0$ (i.e.\ $\alpha=k<d$), take
$X=\mathrm{diag}(1/\sqrt{k},\dots,1/\sqrt{k},0,\dots,0)$ and $x=\mathrm{vec}(X)$.
Then $x\in\mathcal V_k$, $\|X\|_1=\sqrt{k}$, so $|\langle\omega,x\rangle|^2 = k/d = m_k$ and thus
$\langle x, C_{\Phi_t}x\rangle = 1 - td\,m_k = 1-tk <0$ whenever $t>1/k$.

\end{proof}

\subsection{\texorpdfstring{A fractional Kraus characterization of $\alpha$-superpositive maps}{A fractional Kraus characterization of alpha-superpositive maps}}
\label{subsec:fractional-kraus}

A central benefit of the fractional framework is that the dual cone on maps admits an intrinsic Kraus-type
description.  In the integer case, $k$-superpositivity is equivalent to the existence of a Kraus decomposition with
Kraus operators of rank at most $k$.  The main result of this subsection shows that the fractional level $\alpha$ admits a precise
analog, in which the classical rank bound is replaced by the same singular-value constraint underlying
$\alpha$-admissibility.

First note that we can define  the $\alpha$-admissibility for a matrix $X\in \mathbb{M}_{m,n}$ as well. Since the singular values of $X=\mathrm{mat} (x)$ are exactly the Schmidt coefficients of $x=\mathrm{Vec}(X)$.

\begin{definition}[$\alpha$-admissible matrices]
\label{def:AdmA}
Fix $\alpha\in[1,d]$, write $k:=\lfloor\alpha\rfloor$, $\theta:=\alpha-k\in[0,1)$, and $r:=\lceil\alpha\rceil$.
A matrix $A\in \mathbb{M}_{m,n}$ is called \emph{$\alpha$-admissible} if
\begin{enumerate}
\item[(i)] (\emph{Rank ceiling}) $\sigma_j(A)=0$ for all $j\ge r+1$ (equivalently $\mathrm{rank}(A)\le r$);
\item[(ii)] (\emph{Normalized ratio}) if $\theta>0$, then
\begin{align}\label{eq:A-adm}
\sigma_{k+1}(A)\ \le\ \frac{\theta}{k}\sum_{j=1}^k\sigma_j(A).
\end{align}
\end{enumerate}
We denote the set of all $\alpha$-admissible matrices by $\mathfrak{A}_\alpha$.
\end{definition}

\begin{remark}
\label{rem:normratio}
If $A\neq 0$ and $\mathrm{rank}(A)\le k+1$, then \eqref{eq:A-adm} is equivalent to
\[
\frac{\|A\|_1}{\|A\|_{(k)}}\le 1+\frac{\theta}{k},
\]
where $\|A\|_1=\sum_{j=1}^{\min(m,n)}\sigma_j(A)$ is the Schatten $1$-norm (nuclear norm) and
$\|A\|_{(k)}=\sum_{j=1}^k\sigma_j(A)$ is the Ky Fan $k$-norm.
(For $A=0$, \eqref{eq:A-adm} holds trivially.)
\end{remark}

\medskip

\begin{theorem}[Fractional Kraus characterization of $\alpha$-superpositive maps]
\label{thm:fractional-kraus}
Fix $\alpha\in[1,d]$ and write $\alpha=k+\theta$ with $k=\lfloor\alpha\rfloor$ and $\theta\in[0,1)$.
Let $\Phi:\mathbb{M}_n\to \mathbb{M}_m$ be linear. The following are equivalent:
\begin{enumerate}
\item[\textup{(i)}] $\Phi\in\mathsf{SP}_\alpha$, i.e.\ $C_\Phi\in\mathsf{K}_\alpha$.
\item[\textup{(ii)}] There exist $N\in\mathds{N}$ and matrices $A_1,\dots,A_N\in\mathfrak{A}_\alpha$ such that
\begin{equation}
\label{eq:alpha-kraus}
\Phi(X)=\sum_{i=1}^N A_i X A_i^\ast\qquad\text{for all }X\in \mathbb{M}_n.
\end{equation}
\item[\textup{(iii)}] There exist $N\in\mathds{N}$, vectors $\psi_1,\dots,\psi_N\in\mathcal{V}_\alpha$, and scalars $t_1,\dots,t_N>0$ such that
\begin{equation}
\label{eq:alpha-choi-decomp}
C_\Phi=\sum_{i=1}^N t_i\,\psi_i \psi_i^\ast.
\end{equation}
\end{enumerate}
Moreover, in \textup{(iii)} one may choose $N\le (nm)^2$.

In particular, $\mathsf{SP}_\alpha$ consists precisely of the completely positive maps generated by Kraus operators whose singular values
satisfy the fractional constraint \eqref{eq:A-adm}.
\end{theorem}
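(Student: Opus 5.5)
We prove (i)$\Rightarrow$(iii)$\Rightarrow$(ii)$\Rightarrow$(iii)$\Rightarrow$(i). Everything rests on one link between rank-one terms of the Choi matrix and Kraus operators: the map $X\mapsto AXA^\ast$ with $A\in\mathbb{M}_{m,n}$ has Choi matrix $C_{\mathrm{Ad}_A}=\sum_{i,j}E_{ij}\otimes AE_{ij}A^\ast=v_Av_A^\ast$, where $v_A:=\sum_i e_i\otimes Ae_i\in\mathds{C}^n\otimes\mathds{C}^m$. The matricization of $v_A$ (with respect to $\{e_i\}$, $\{f_j\}$) is $A^{\mathsf T}$, whose singular values coincide with those of $A$. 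Hence the Schmidt coefficients of $v_A$ are exactly $\sigma_j(A)$, and the map $A\mapsto v_A$ is a linear bijection $\mathbb{M}_{m,n}\to\mathds{C}^n\otimes\mathds{C}^m$. Both conditions in Definition~\ref{def:AdmA} are invariant under positive scaling, so for $A\ne0$ we have $A\in\mathfrak A_\alpha$ if and only if $v_A/\|v_A\|\in\mathcal V_\alpha$. This is the fact the whole proof uses, and we would state it first as a small lemma.

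\emph{(i)$\Rightarrow$(iii).} By Corollary~\ref{cor:Kalpha-no-closure}, $\mathsf K_\alpha=\cone(\mathcal S_\alpha)$, so no closure is needed. Any $C_\Phi\in\mathsf K_\alpha$ is therefore a finite sum $\sum t_i\psi_i\psi_i^\ast$ with $\psi_i\in\mathcal V_\alpha$ and $t_i\ge0$, and we drop the terms with $t_i=0$. If $C_\Phi=0$, take a single term with $t_1>0$ only after handling the trivial case separately; alternatively, allow $N=0$ as the convention. To bound $N$, apply Carathéodory's theorem for cones in the real vector space $(\mathbb M_n\otimes\mathbb M_m)^{\mathrm h}$, which has real dimension $(nm)^2$. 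Any element of the conic hull of a set is a nonnegative combination of at most $\dim$ linearly independent generators, so $N\le (nm)^2$.

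\emph{(iii)$\Rightarrow$(ii).} Let $A_i$ be the unique matrix with $v_{A_i}=\sqrt{t_i}\,\psi_i$. By the lemma, $A_i\in\mathfrak A_\alpha$ because $\psi_i\in\mathcal V_\alpha$ and admissibility is scale-invariant. Then $C_\Phi=\sum_i v_{A_i}v_{A_i}^\ast=\sum_i C_{\mathrm{Ad}_{A_i}}$. Injectivity of the Choi correspondence $\Phi\mapsto C_\Phi$ gives $\Phi=\sum_i\mathrm{Ad}_{A_i}$, which is \eqref{eq:alpha-kraus}.

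\emph{(ii)$\Rightarrow$(iii)$\Rightarrow$(i).} Given \eqref{eq:alpha-kraus}, discard any $A_i=0$. Then $C_\Phi=\sum_i v_{A_i}v_{A_i}^\ast=\sum_i t_i\psi_i\psi_i^\ast$ with $t_i=\|v_{A_i}\|^2=\|A_i\|_F^2>0$ and $\psi_i=v_{A_i}/\|v_{A_i}\|\in\mathcal V_\alpha$. This is (iii), and (iii) gives $C_\Phi\in\cone(\mathcal S_\alpha)\subseteq\mathsf K_\alpha$, which is (i). The final sentence of the theorem restates (i)$\Leftrightarrow$(ii).

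The main obstacle is the bookkeeping in the lemma rather than any conceptual step. With the paper's Choi convention $C_\Phi=\sum E_{ij}\otimes\Phi(E_{ij})$, the first tensor factor is the input space $\mathds{C}^n$. We must therefore check carefully that $\mathrm{mat}(v_A)$ is $A^{\mathsf T}$ (an $n\times m$ matrix) rather than $A$, and that transposition does not change singular values. We must also verify that the equivalence with $\mathcal V_\alpha$ survives normalization, since the ratio constraint \eqref{eq:A-adm} is homogeneous of degree one. A secondary point is to state Carathéodory's bound in its conic form so that the claimed $N\le(nm)^2$ follows.
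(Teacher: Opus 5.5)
Your proposal is correct and follows essentially the paper's proof. You use the same cycle of implications, Corollary~\ref{cor:Kalpha-no-closure} plus a Carath\'eodory bound for (i)$\Rightarrow$(iii), and the $\mathrm{vec}$/Choi correspondence with injectivity of $\Phi\mapsto C_\Phi$ for (ii)$\Leftrightarrow$(iii). Your conic Carath\'eodory in the $(nm)^2$-dimensional space $(\mathbb{M}_n\otimes\mathbb{M}_m)^{\mathrm{h}}$ is equivalent to the paper's affine version in the trace-one hyperplane. Your explicit check that $\mathrm{mat}(v_A)=A^{\mathsf T}$ is a bit of care the paper skips.

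The one thing to tidy is $\Phi=0$. Your first suggested fix does not work, since a single term with $t_1>0$ can never sum to $0$. So (iii) can only hold there with the empty sum $N=0$, which is the convention the paper also tacitly uses.
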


\begin{proof}
\textup{(iii)$\Rightarrow$(i)} is immediate from the definition of $\mathsf{K}_\alpha$.

\smallskip
\noindent\textup{(ii)$\Rightarrow$(iii)} Assume \eqref{eq:alpha-kraus} with $A_i\in\mathfrak{A}_\alpha$.
It is an standard use of the Choi--Jamio\l kowski correspondence that
\[
C_\Phi=\sum_{i=1}^N \mathrm{vec}(A_i)\,\mathrm{vec}(A_i)^\ast.
\]
For each $i$ with $A_i\neq 0$, set $\psi_i:=\mathrm{vec}(A_i)/\|\mathrm{vec}(A_i)\|$ and $t_i:=\|\mathrm{vec}(A_i)\|^2$.
Since $A_i\in\mathfrak{A}_\alpha$ we have $\psi_i\in\mathcal{V}_\alpha$. This yields \eqref{eq:alpha-choi-decomp}.

\smallskip
\noindent\textup{(i)$\Rightarrow$(iii)} Let $\mathcal{S}_\alpha:=\{\psi\psi^\ast:\ \psi\in\mathcal{V}_\alpha\}$.
By Corollary~\ref{cor:Kalpha-no-closure}, $\mathsf{K}_\alpha=\mathrm{cone}(\mathcal{S}_\alpha)$.
Thus $C_\Phi\in\mathsf{K}_\alpha$ means $C_\Phi=t\rho$ for some $t:=\Tr(C_\Phi)\ge 0$ and some
\[
\rho:=\frac{1}{\Tr(C_\Phi)}\,C_\Phi \ \in\ \mathrm{conv}(\mathcal{S}_\alpha),
\]
where we interpret $\rho$ arbitrarily if $C_\Phi=0$.
Since every element of $\mathcal{S}_\alpha$ has trace $1$, the convex set $\mathrm{conv}(\mathcal{S}_\alpha)$ lies in the affine hyperplane
\[
\mathcal{H}:=\{H\in (\mathbb{M}_n\otimes \mathbb{M}_m)^{\mathrm{h}}:\ \Tr(H)=1\},
\]
which has real dimension $(nm)^2-1$. By Carath\'eodory's theorem in $\mathcal{H}$
\cite[Thm.~1.3]{BarvinokConvexity} (see also \cite[Sec.~17]{RockafellarCA}),
we may write
\[
\rho=\sum_{i=1}^{N} p_i\, \psi_i \psi_i^\ast
\quad\text{with}\quad
N\le (nm)^2,\ \ p_i\ge 0,\ \sum_i p_i=1,\ \ \psi_i\in\mathcal{V}_\alpha.
\]
Therefore
\[
C_\Phi=\sum_{i=1}^N (t p_i)\, \psi_i \psi_i^\ast,
\]
which is \eqref{eq:alpha-choi-decomp} after removing zero coefficients.

\smallskip
\noindent\textup{(iii)$\Rightarrow$(ii)} Assume \eqref{eq:alpha-choi-decomp}.
For each $i$   choose $A_i\in\mathbb M_{m,n}$ such that  $\mathrm{vec}(A_i)=\sqrt{t_i}\,\psi_i$ and   $A_i\in\mathfrak{A}_\alpha$ by $\psi_i\in\mathcal{V}_\alpha$.
Define $\Psi:\mathbb{M}_n\to \mathbb{M}_m$ by $\Psi(X)=\sum_{i=1}^N A_i X A_i^\ast$.
Computing the Choi matrix of $\Psi$ we have
\[
C_\Psi=\sum_{i=1}^N \mathrm{vec}(A_i)\,\mathrm{vec}(A_i)^\ast
=\sum_{i=1}^N t_i\,\psi_i \psi_i^\ast
=C_\Phi.
\]
Since $\Phi\mapsto C_\Phi$ is injective,
we conclude that $\Psi=\Phi$, hence \eqref{eq:alpha-kraus} holds.

\end{proof}

\begin{remark}[Integer recovery and the endpoint $\alpha=d$]
If $\alpha=k$ is an integer, then $\theta=0$ and \eqref{eq:A-adm} reduces to $\sigma_{k+1}(A)=0$, i.e.\ $\mathrm{rank}(A)\le k$.
Thus Theorem~\ref{thm:fractional-kraus} recovers the classical characterization of $k$-superpositive maps.

If $\alpha=d$ (hence $k=d$ and $\theta=0$), then $\mathfrak{A}_d=\mathbb{M}_{m,n}$ and $\mathsf{SP}_d$ coincides with the full cone of
completely positive maps (equivalently $C_\Phi\ge 0$).
\end{remark}

 \section{Explicit Examples and Thresholds}\label{sec:applications}
\noindent
This section illustrates how the fractional cones lead to concrete, verifiable criteria in symmetric situations.
We begin by introducing the critical indices $\FSN(\rho)$ and $\tau(\Phi)$ and the extremal quadratic forms
$\lambda_\alpha(\cdot),\mu_\alpha(\cdot)$ which serve as the main computational interface with the abstract cone theory.
We then analyze the isotropic two-dimensional slice $\mathrm{span}\{I,P_\omega\}$, where membership in $\mathsf K_\alpha$
reduces to explicit linear constraints and yields closed-form formulas.
Finally, we relate these slice computations to the canonical depolarizing ray
$\Phi_t(X)=\Tr(X)I_d-tX$ through its Choi matrix and deduce the corresponding stability-index behavior without repeating
the full threshold proof.

\subsection{Critical indices and extremal quadratic forms}
\label{subsec:app-toolbox}

The fractional filtrations $(\mathsf K_\alpha)_{\alpha\in[1,d]}$ and $(\mathsf P_\alpha)_{\alpha\in[1,d]}$
induce natural \emph{critical indices} which convert the parameter $\alpha$ into numerical invariants.
These indices are the main interfaces between the abstract cone theory and concrete computations.

\begin{definition}[Fractional Schmidt index]
\label{def:FSN}
For a nonzero $\rho\in(\mathbb{M}_n\otimes\mathbb{M}_m)^+$, with $d:=\min\{n,m\}$, define
\begin{equation}
\label{eq:FSN-def}
\FSN(\rho)\;:=\;\inf\big\{\alpha\in[1,d]:\ \rho\in\mathsf{K}_\alpha\big\}.
\end{equation}
\end{definition}

\begin{definition}[Stability index]
\label{def:tau}
For a Hermitian-preserving map $\Phi\in\mathsf P_1$   with $d:=\min\{n,m\}$, define
\begin{equation}
\label{eq:tau-def}
\tau(\Phi)\;:=\;\sup\big\{\alpha\in[1,d]:\ \Phi\in\mathsf{P}_\alpha\big\}.
\end{equation}
This gives the largest fractional level up to which $\Phi$ remains $\alpha$-positive.
\end{definition}

\begin{remark}
Both indices are scale-invariant in the natural sense: $\FSN(c\rho)=\FSN(\rho)$,  and
$\tau(c\Phi)=\tau(\Phi)$ for all $c>0$ (since each $\mathsf{P}_\alpha$ is a cone).
\end{remark}

\medskip
\noindent
We will repeatedly use extremal quadratic forms over $\mathcal{V}_\alpha$.
Recall (see \eqref{eq:lambda-alpha} and Proposition~\ref{prop:compact-lambda}) that for
$W\in(\mathbb{M}_n\otimes\mathbb{M}_m)^{\mathrm{h}}$,
\[
\lambda_\alpha(W):=\min\{\langle \psi,W\psi\rangle:\ \psi\in\mathcal{V}_\alpha\},
\qquad
W\in\mathsf{BP}_\alpha\iff \lambda_\alpha(W)\ge 0,
\]
and we also set
\begin{equation}
\label{eq:mu-alpha-def}
\mu_\alpha(W):=\max\{\langle \psi,W\psi\rangle:\ \psi\in\mathcal{V}_\alpha\}
= -\lambda_\alpha(-W),
\end{equation}
which exists since $\mathcal{V}_\alpha$ is compact.

\begin{proposition}[Recovery of the integer hierarchy]
\label{prop:integer-recovery}
Let $k\in\{1,\dots,d\}$.
\begin{enumerate}
\item[\textup{(i)}] For $\rho\in(\mathbb{M}_n\otimes\mathbb{M}_m)^+$,
\[
\rho\in\mathsf{K}_k \quad\Longleftrightarrow\quad \FSN(\rho)\le k.
\]
Consequently,
\[
\min\big\{\ell\in\{1,\dots,d\}:\ \rho\in\mathsf{K}_\ell\big\}
=\big\lceil \FSN(\rho)\big\rceil.
\]

\item[\textup{(ii)}] For a Hermitian-preserving map $\Phi$,
\[
\Phi\in\mathsf{P}_k \quad\Longleftrightarrow\quad \tau(\Phi)\ge k.
\]
\end{enumerate}
\end{proposition}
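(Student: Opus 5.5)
The plan rests on one monotonicity observation: the families $\mathcal V_\alpha$ are nested in $\alpha$, so the cones $\mathsf K_\alpha$ increase and the cones $\mathsf{BP}_\alpha$ (equivalently $\mathsf P_\alpha$) decrease. For $\alpha\le\beta$ I will check $\mathcal V_\alpha\subseteq\mathcal V_\beta$ directly from Definition~\ref{def:alpha-admissible}. There are two cases.

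\emph{Case $\lfloor\alpha\rfloor=\lfloor\beta\rfloor=k$.} The rank ceiling of $\alpha$ is at most that of $\beta$. If $\theta_\alpha=0$, any $\psi\in\mathcal V_\alpha$ has $s_{k+1}=0$, so the ratio condition for $\beta$ holds trivially. Otherwise the bound $\frac{\theta}{k}\sum_{j\le k}s_j$ increases with $\theta$.

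\emph{Case $\lfloor\alpha\rfloor<\lfloor\beta\rfloor$.} Then $\lceil\alpha\rceil\le\lfloor\beta\rfloor$, so $\mathcal V_\alpha\subseteq\mathcal V_{\lceil\alpha\rceil}$, which consists of vectors with $\SR\le\lfloor\beta\rfloor$. Such vectors have $s_{\lfloor\beta\rfloor+1}=0$ and hence lie in $\mathcal V_\beta$.

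This is essentially Proposition~\ref{prop:strict-inclusions}(a) extended to arbitrary pairs. From it we get $\mathsf K_\alpha\subseteq\mathsf K_\beta$ and $\mathsf{BP}_\beta\subseteq\mathsf{BP}_\alpha$, hence $\mathsf P_\beta\subseteq\mathsf P_\alpha$.

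For (i), the forward direction is immediate: $\rho\in\mathsf K_k$ places $k$ in the set defining $\FSN$, so $\FSN(\rho)\le k$. The converse is the main point, since the infimum need not be attained a priori.

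\emph{First subcase.} Suppose $\FSN(\rho)<k$. Then there is $\alpha<k$ with $\rho\in\mathsf K_\alpha\subseteq\mathsf K_k$ by monotonicity, and we are done.

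\emph{Second subcase.} Suppose $\FSN(\rho)=k$. Then there are $\alpha_j\downarrow k$ with $\rho\in\mathsf K_{\alpha_j}$. If some $\alpha_j=k$ we are done. Otherwise (only when $k<d$) all $\alpha_j\in(k,k+1)$ with $\theta_j\to0$, and I will use closedness. Normalize $\operatorname{Tr}\rho=1$ (the case $\rho=0$ is trivial and is excluded anyway). By Corollary~\ref{cor:Kalpha-no-closure} and Carathéodory, as in Theorem~\ref{thm:fractional-kraus}, write
\[
\rho=\sum_{i=1}^N p_i^{(j)}\psi_i^{(j)}\psi_i^{(j)*}
\]
with $N\le(nm)^2$ fixed and $\psi_i^{(j)}\in\mathcal V_{\alpha_j}$. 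Pass to a subsequence along which all $p_i^{(j)}$ and $\psi_i^{(j)}$ converge, using compactness of the simplex and the sphere. For each limit $\psi_i$, continuity of singular values gives $\SR(\psi_i)\le k+1$ and
\[
s_{k+1}(\psi_i)\le\lim_j\frac{\theta_j}{k}\sum_{l\le k}s_l\bigl(\psi_i^{(j)}\bigr)=0.
\]
Hence $\psi_i\in\mathcal V_k$, and so $\rho\in\mathsf K_k$. This limiting argument is the step needing care; the rest is bookkeeping.

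For the consequence in (i), let $\ell_0$ be the minimal integer with $\rho\in\mathsf K_{\ell_0}$. It exists because $\mathsf K_d$ is the full positive cone, $\mathcal V_d$ being the whole sphere. By the equivalence just proved, $\rho\in\mathsf K_\ell$ if and only if $\ell\ge\FSN(\rho)$, and the least such integer is $\lceil\FSN(\rho)\rceil$.

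For (ii), the forward direction is again immediate from the definition of the supremum. Conversely, suppose $\tau(\Phi)\ge k$. For $k=1$ this holds since $\Phi\in\mathsf P_1$ is part of the hypothesis in Definition~\ref{def:tau}. For $k\ge2$, monotonicity shows that $\{\alpha:\Phi\in\mathsf P_\alpha\}$ is an initial interval of $[1,d]$. So either $k$ itself belongs to it, or $\Phi\in\mathsf P_\alpha$ for all $\alpha<k$. In the latter case, every $\psi$ with $\SR(\psi)\le k$ lies in $\mathcal V_{k-1+\theta}$ once $\theta$ is close enough to $1$. Indeed, if $s_k>0$ then
\[
\frac{(k-1)s_k}{\sum_{j<k}s_j}\le1,
\]
and the borderline value $1$ arises only when $s_1=\dots=s_k$. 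To avoid that borderline I will argue instead by continuity: $\lambda_{k-1+\theta}(C_\Phi)\ge0$ for all $\theta<1$. Any such $\psi$ is a limit of vectors $\psi'$ with a slightly perturbed top Schmidt coefficient, which satisfy the strict ratio and so lie in some $\mathcal V_{k-1+\theta}$. The inequality $\langle\psi',C_\Phi\psi'\rangle\ge0$ then passes to the limit, giving $\langle\psi,C_\Phi\psi\rangle\ge0$. Hence $C_\Phi$ is $k$-block-positive, i.e.\ $\Phi\in\mathsf P_k$.
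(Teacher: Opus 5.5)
Your proof is correct. It is also more careful than the paper's, which is a single sentence: both equivalences are said to follow immediately from the monotonicity $\mathsf K_1\subseteq\cdots\subseteq\mathsf K_d$, $\mathsf P_1\supseteq\cdots\supseteq\mathsf P_d$ together with the definitions of $\FSN$ and $\tau$.

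Monotonicity does handle the forward implications. It also handles the cases $\FSN(\rho)<k$, or where the supremum defining $\tau(\Phi)$ is attained at some $\alpha\ge k$. It cannot by itself exclude a non-attained infimum $\FSN(\rho)=k$ with $\rho\notin\mathsf K_k$, or $\tau(\Phi)=k$ with $\Phi\notin\mathsf P_k$. What is really needed are the one-sided continuity statements
$\bigcap_{\alpha>k}\mathsf K_\alpha=\mathsf K_k$ and $\bigcap_{\alpha<k}\mathsf P_\alpha=\mathsf P_k$, which the paper never addresses.

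You supply both:
\begin{enumerate}
\item[(1)] The first via the fixed-length Carath\'eodory decomposition plus compactness. The bound $s_{k+1}\le\frac{\theta_j}{k}\sqrt{k}\to 0$ forces every limit vector into $\mathcal V_k$.
\item[(2)] The second by observing that $\bigcup_{\theta<1}\mathcal V_{k-1+\theta}$ misses only the unit vectors with $s_1=\cdots=s_k$. These are limits of admissible vectors obtained by raising the top Schmidt coefficient, so $\langle\psi',C_\Phi\psi'\rangle\ge 0$ passes to the limit.
\end{enumerate}
You also verify $\mathcal V_\alpha\subseteq\mathcal V_\beta$ for arbitrary $\alpha\le\beta$. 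The paper only records the comparison with the neighbouring integers in Proposition~\ref{prop:strict-inclusions}(a).

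The paper's route buys brevity; yours actually establishes the converse directions. The same compactness argument, run at a non-integer $\alpha$ with $\theta_j\downarrow\theta$, shows that the infimum defining $\FSN(\rho)$ is in fact always attained.
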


\begin{proof}
Both statements follow immediately from the monotonicity
$\mathsf{K}_1\subseteq\cdots\subseteq \mathsf{K}_\alpha\subseteq\cdots\subseteq \mathsf{K}_d$
and
$\mathsf{P}_1\supseteq\cdots\supseteq \mathsf{P}_\alpha\supseteq\cdots\supseteq \mathsf{P}_d$,
together with the definitions \eqref{eq:FSN-def}--\eqref{eq:tau-def}.
\end{proof}


\subsection{An exact fractional profile on a rank-one perturbation slice}
\label{subsec:flagship-slice}

We now present a closed-form application showing that the fractional parameter $\alpha$
resolves the classical stepwise thresholds into a continuous profile.
The computation is completely finite-dimensional and takes place inside the two-dimensional real subspace
$\mathrm{span}\{I,P_\omega\}\subset\mathbb{M}_{d^2}$ generated by the identity and a rank-one projection.

\subsubsection*{Setup: a two-dimensional slice on $\mathds{C}^d\otimes\mathds{C}^d$}
Fix $d\ge 2$ and let $\{e_i\}_{i=1}^d$ be the standard basis of $\mathds{C}^d$.
Define
\begin{equation}
\label{eq:omega-def}
\omega \;:=\; \frac{1}{\sqrt d}\sum_{i=1}^d e_i\otimes e_i,
\qquad
P_\omega \;:=\; \omega\omega^\ast \in \mathbb{M}_{d^2}.
\end{equation}
Consider the normalized positive semidefinite family
\begin{equation}
\label{eq:rhoF-def}
\rho_F \;:=\; \frac{1-F}{d^2-1}\,(I-P_\omega)\;+\;F\,P_\omega,
\qquad 0\le F\le 1,
\end{equation}
so that $\Tr(\rho_F)=1$ and $\Tr(P_\omega\rho_F)=F$.

At the \emph{integer} levels $\alpha=k$, the family \eqref{eq:rhoF-def} exhibits a sharp step threshold on the slice
$\mathrm{span}\{I,P_\omega\}$: by the classical computation of Terhal--Horodecki
(see \cite[Theorem~2]{TerhalHorodeckiSN}), one has
\begin{equation}
\label{eq:integer-threshold-rhoF}
\rho_F\in \mathsf{K}_k
\qquad\Longleftrightarrow\qquad
F\le \frac{k}{d},
\qquad k\in\{1,\dots,d\}.
\end{equation}
Thus, along this rank-one perturbation slice, the integer hierarchy $\{\mathsf{K}_k\}_{k=1}^d$ yields a
stepwise membership rule with breakpoints at $F=k/d$.
Our goal is to compute the \emph{fractional} refinement of \eqref{eq:integer-threshold-rhoF}:
namely, to determine an explicit profile $f_d(\alpha)$ such that for every $\alpha\in[1,d]$,
\[
\rho_F\in \mathsf{K}_\alpha \quad\Longleftrightarrow\quad F\le f_d(\alpha),
\]
with $f_d(k)=k/d$ at integers and a continuous interpolation between successive integer thresholds.

\begin{lemma}
\label{lem:mu-alpha-Pomega}
Fix $d\ge 2$ and let $\omega\in\mathds{C}^d\otimes\mathds{C}^d$ and $P_\omega=\omega\omega^*$ be as in \eqref{eq:omega-def}.
Fix $\alpha\in[1,d]$ and set $k:=\lfloor\alpha\rfloor$, $\theta:=\alpha-k\in[0,1)$, and $r:=\lceil\alpha\rceil$.
Then
\begin{equation}
\label{eq:mu-alpha-Pomega-correct}
\mu_\alpha(P_\omega)
\;=\;
\max_{\psi\in\mathcal{V}_\alpha}\langle \psi,\,P_\omega\psi\rangle
\;=\;
\frac{(k+\theta)^2}{d\,(k+\theta^2)}.
\end{equation}
Moreover, the maximum is attained by a vector $\psi\in\mathcal{V}_\alpha$ whose Schmidt coefficients satisfy
\[
s_1=\cdots=s_k=\frac{1}{\sqrt{k+\theta^2}},
\qquad
s_{k+1}=\frac{\theta}{\sqrt{k+\theta^2}},
\qquad
s_j=0\ \ (j\ge k+2),
\]
(with the conventions that $s_{k+1}=0$ when $\theta=0$, and that when $\alpha=d$ one may take $\psi=\omega$).
\end{lemma}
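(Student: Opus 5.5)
The plan is to reduce the lemma to the computation of $m_\alpha$ already carried out in the proof of Theorem~\ref{thm:alpha-threshold-Phi-t}. Since $P_\omega=\omega\omega^\ast$, we have $\langle\psi,P_\omega\psi\rangle=|\langle\omega,\psi\rangle|^2$ for every $\psi$. Hence
\[
\mu_\alpha(P_\omega)=\max_{\psi\in\mathcal V_\alpha}|\langle\omega,\psi\rangle|^2=m_\alpha .
\]
The maximum exists because $\mathcal V_\alpha$ is compact by Proposition~\ref{prop:compact-lambda}(a).

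For $\alpha<d$ I would repeat Steps 1--3 of that proof. First, invariance of $\mathcal V_\alpha$ under local unitaries (Proposition~\ref{prop:local-unitary-invariance}(i)), combined with Lemma~\ref{lem:omega-overlap-max}, gives
\[
m_\alpha=\tfrac1d\Big(\max_{x\in\mathcal V_\alpha}\|\mathrm{mat}(x)\|_1\Big)^2 .
\]
Second, membership $x\in\mathcal V_\alpha$ translates into the singular-value hypotheses of Lemma~\ref{lem:max-trace-norm}: rank at most $k+1$, unit Frobenius norm, and the ratio constraint (or $\sigma_{k+1}=0$ when $\theta=0$). That lemma then bounds $\|X\|_1$ by $(k+\theta)/\sqrt{k+\theta^2}$. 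Squaring and dividing by $d$ gives the upper bound in \eqref{eq:mu-alpha-Pomega-correct}.

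For attainment, take
\[
X=\mathrm{diag}(\sigma_1,\dots,\sigma_{k+1},0,\dots,0)
\]
with the extremal values from the equality case of Lemma~\ref{lem:max-trace-norm}, and set $\psi=\mathrm{vec}(X)$. Then $\psi$ is a unit vector, its Schmidt coefficients are the prescribed ones, and it lies in $\mathcal V_\alpha$ because the ratio constraint holds with equality. Since $X\ge0$, we get $\langle\omega,\psi\rangle=\Tr(X)/\sqrt d=\|X\|_1/\sqrt d$, so the bound is achieved.

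For $\alpha=d$ we have $k=d$, $\theta=0$, and $\mathcal V_d$ is the whole unit sphere. Cauchy--Schwarz gives a maximum of $1=d^2/(d\cdot d)$, attained at $\psi=\omega$. This matches the formula and the stated convention.

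I expect no real obstacle here. The only point needing care is the edge case $\alpha=d$, because Lemma~\ref{lem:max-trace-norm} requires $k\le d-1$, so that case must be handled separately.
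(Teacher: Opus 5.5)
Your proof is correct, and its skeleton matches the paper's. The case $\alpha=d$ is handled separately by Cauchy--Schwarz, and for $\alpha<d$ everything reduces to the scalar optimization of Lemma~\ref{lem:max-trace-norm}.

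The difference is in how you pass from $|\langle\omega,\psi\rangle|^2$ to $\frac1d\big(\sum_j s_j(\psi)\big)^2$. The paper claims one may assume without loss of generality that $\psi=\sum_j s_j\,e_j\otimes e_j$, citing local-unitary invariance of $\mathcal V_\alpha$ and $U\otimes\overline U$-invariance of $\omega$. But bringing a general $\psi$ to that form needs a general $U\otimes V$, which moves $\omega$, so the overlap is not preserved. For instance, $e_1\otimes e_2\in\mathcal V_1$ is orthogonal to $\omega$, while its Schmidt-diagonal representative $e_1\otimes e_1$ has overlap $1/\sqrt d$.

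What is true, and all that is needed, is the inequality $|\langle\omega,\psi\rangle|=\frac{1}{\sqrt d}|\Tr X|\le\frac{1}{\sqrt d}\|X\|_1$ with $X=\mathrm{mat}(\psi)$. Your route through Lemma~\ref{lem:omega-overlap-max} (as in the proof of Theorem~\ref{thm:alpha-threshold-Phi-t}) supplies exactly this, so your reduction is the rigorous version of the paper's step. Likewise, your explicit maximizer $\psi=\mathrm{vec}(X)$ with $X=\mathrm{diag}(\sigma_1,\dots,\sigma_{k+1},0,\dots,0)$ and $\langle\omega,\psi\rangle=\Tr(X)/\sqrt d$ replaces the paper's appeal to the equality case, which relied on the same faulty reduction.

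One cosmetic point: when $\theta=0$ there is no ratio constraint. Membership $\psi\in\mathcal V_k$ then follows simply from $\sigma_{k+1}=0$, i.e. rank at most $k$, not from the ratio constraint holding with equality.
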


\begin{proof}
If $\alpha=d$, then  $k=d$, $\theta=0$, and $\mathcal{V}_d$ is the full unit sphere of $\mathds{C}^d\otimes\mathds{C}^d$.
Hence
\[
\mu_d(P_\omega)=\max_{\|\psi\|=1}\langle\psi,P_\omega\psi\rangle
=\|P_\omega\|=1,
\]
and the maximum is attained at $\psi=\omega$.
On the other hand, the right-hand side of \eqref{eq:mu-alpha-Pomega-correct} equals
$\frac{d^2}{d\cdot d}=1$, so \eqref{eq:mu-alpha-Pomega-correct} holds.

If $\alpha\in[1,d)$, then $k=\lfloor\alpha\rfloor\in\{1,\dots,d-1\}$ and $r=k+1$.
Let $\psi\in\mathcal{V}_\alpha$ be a unit vector. Since $\mathcal{V}_\alpha$ is invariant under local unitaries,
we may apply suitable unitaries $U,V\in\mathbb{M}_d$ so that $(U\otimes V)\psi$ is Schmidt-diagonal
in the basis defining $\omega$. Since $\langle\psi,P_\omega\psi\rangle$ is unchanged by applying the same local unitaries
to both $\psi$ and $\omega$ (and $\omega$ is itself invariant under $U\otimes\overline{U}$), we may assume without loss of generality that
\[
\psi=\sum_{j=1}^{r} s_j\, e_j\otimes e_j,
\qquad s_1\ge\cdots\ge s_{r}\ge 0,
\qquad \sum_{j=1}^{r}s_j^2=1,
\]
where $r=k+1$ and $s_j=0$ for $j\ge r+1$.
Then
\[
\langle \psi,P_\omega\psi\rangle=|\langle\omega,\psi\rangle|^2
=\frac{1}{d}\Big(\sum_{j=1}^{r} s_j\Big)^2
=\frac{1}{d}\Big(\sum_{j=1}^{k+1} s_j\Big)^2.
\]

Set $S:=\sum_{j=1}^k s_j$ and $b:=s_{k+1}$. If $\theta>0$, the defining constraint for $\mathcal{V}_\alpha$ gives
$b\le (\theta/k)S$. (If $\theta=0$, then $r=k$ and hence $b=0$; equivalently, the same inequality holds with $b=0$.)
Thus the problem reduces to maximizing $(S+b)^2$ subject to
\[
\sum_{j=1}^k s_j^2+b^2=1,
\qquad
b\le \frac{\theta}{k}S \ \ (\theta>0),
\qquad s_j\ge 0.
\]

This is exactly the scalar optimization carried out in Lemma~\ref{lem:max-trace-norm} (with $\sigma_j=s_j$):
it yields the sharp bound
\[
S+b=\sum_{j=1}^{k+1}s_j \;\le\; \frac{k+\theta}{\sqrt{k+\theta^2}},
\]
with equality if and only if $s_1=\cdots=s_k=1/\sqrt{k+\theta^2}$ and $b=\theta/\sqrt{k+\theta^2}$
(with $b=0$ when $\theta=0$).
Therefore
\[
\langle \psi,P_\omega\psi\rangle
=\frac{1}{d}(S+b)^2
\le \frac{1}{d}\left(\frac{k+\theta}{\sqrt{k+\theta^2}}\right)^2
=\frac{(k+\theta)^2}{d\,(k+\theta^2)}.
\]
This proves \eqref{eq:mu-alpha-Pomega-correct}. The equality conditions give the stated maximizers.
\end{proof}

\medskip

Define the profile function
\begin{equation}
\label{eq:fd-def}
f_d(\alpha)\;:=\;\mu_\alpha(P_\omega).
\end{equation}

\medskip
\noindent
To obtain closed-form constraints, we restrict to the isotropic two-dimensional subspace
$\mathrm{span}\{I,P_\omega\}$, where the quadratic form tests against $\mathcal V_\alpha$ reduce to a one-variable
optimization problem.  This symmetry reduction converts membership in $\mathsf K_\alpha$ into explicit linear
inequalities in the parameters of $aI+bP_\omega$.  The next theorem records the resulting sharp characterization.

\begin{theorem}[Exact fractional profile on the slice $\mathrm{span}\{I,P_\omega\}$]
\label{thm:flagship-slice}
Let $d\ge 2$, let $\rho_F$ be as in \eqref{eq:rhoF-def}, and let $\alpha\in[1,d]$.
Then
\begin{equation}
\label{eq:rhoF-membership}
\rho_F\in\mathsf{K}_\alpha
\qquad\Longleftrightarrow\qquad
F\le f_d(\alpha).
\end{equation}
Equivalently, writing $\alpha=k+\theta$ with $k=\lfloor\alpha\rfloor$ and $\theta\in[0,1)$, one has
\begin{equation}
\label{eq:fd-explicit}
f_d(\alpha)=\frac{(k+\theta)^2}{d\,(k+\theta^2)}.
\end{equation}
In particular, for integer $\alpha=k$ this gives $f_d(k)=k/d$, recovering the classical integer threshold on this slice
(cf.\ \cite[Theorem~2]{TerhalHorodeckiSN}).
\end{theorem}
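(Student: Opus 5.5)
The plan is to prove the two implications in \eqref{eq:rhoF-membership} separately. Necessity will use the extremal form $\mu_\alpha$. Sufficiency will use the isotropic twirl together with the explicit maximizer from Lemma~\ref{lem:mu-alpha-Pomega}. The closed form \eqref{eq:fd-explicit} is then just Lemma~\ref{lem:mu-alpha-Pomega} applied to the definition \eqref{eq:fd-def}. At $\theta=0$ it gives $k^2/(dk)=k/d$, which is the integer case.

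\emph{Necessity.} Suppose $\rho_F\in\mathsf K_\alpha$. By Corollary~\ref{cor:Kalpha-no-closure} (equivalently Theorem~\ref{thm:fractional-kraus}(iii)) we can write $\rho_F=\sum_i t_i\psi_i\psi_i^\ast$ with $\psi_i\in\mathcal V_\alpha$ and $t_i\ge 0$. Taking traces gives $\sum_i t_i=\Tr(\rho_F)=1$. Hence
\[
F=\Tr(P_\omega\rho_F)=\sum_i t_i\langle\psi_i,P_\omega\psi_i\rangle\le \mu_\alpha(P_\omega)=f_d(\alpha).
\]

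\emph{Sufficiency.} Introduce the twirl
\[
\mathcal T(X):=\int_{U(d)}(U\otimes\overline U)\,X\,(U\otimes\overline U)^\ast\,dU,
\]
where $dU$ is Haar measure. We will use three facts about it.

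(1) $\mathcal T(\mathsf K_\alpha)\subseteq\mathsf K_\alpha$. By Proposition~\ref{prop:local-unitary-invariance}(ii), each conjugate $(U\otimes\overline U)X(U\otimes\overline U)^\ast$ of an element $X\in\mathsf K_\alpha$ lies in $\mathsf K_\alpha$. The Haar integral is a limit of convex combinations of such conjugates (Riemann sums over the compact group). Since $\mathsf K_\alpha$ is a closed convex cone (Lemma~\ref{lem:cone-closed-trace1}), the limit stays in $\mathsf K_\alpha$.

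(2) $\mathcal T$ preserves the trace and the overlap $\Tr(P_\omega\,\cdot)$. This holds because $(U\otimes\overline U)\omega=\omega$.

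(3) The range of $\mathcal T$ is contained in the commutant of $\{U\otimes\overline U\}$, which is the standard isotropic commutant $\mathrm{span}\{I,P_\omega\}$. Since $\{\rho_F\}_{F\in[0,1]}$ consists exactly of the trace-one elements of this span that are positive semidefinite, (2) and (3) together give $\mathcal T(\psi\psi^\ast)=\rho_{F'}$ with $F'=\langle\psi,P_\omega\psi\rangle$ for every unit vector $\psi$.

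Now apply this to two vectors.

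(a) Let $\psi^\star$ be the maximizer from Lemma~\ref{lem:mu-alpha-Pomega}. Then $\rho_{f_d(\alpha)}=\mathcal T(\psi^\star\psi^{\star\ast})\in\mathsf K_\alpha$.

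(b) Let $\psi_0=e_1\otimes e_2$. This is a product vector, so it lies in $\mathcal V_\alpha$, and $\langle\omega,\psi_0\rangle=0$. Hence $\rho_0\in\mathsf K_\alpha$.

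Since $F\mapsto\rho_F$ is affine, for $0\le F\le f_d(\alpha)$ the state $\rho_F$ is a convex combination of $\rho_0$ and $\rho_{f_d(\alpha)}$. By convexity of $\mathsf K_\alpha$, it follows that $\rho_F\in\mathsf K_\alpha$.

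The main obstacle is step (3) of the sufficiency argument, the identification of the range of the twirl with $\mathrm{span}\{I,P_\omega\}$. I would either cite the standard commutant result for $U\otimes\overline U$ (via partial transpose, it is equivalent to the Werner commutant $\mathrm{span}\{I,\mathrm{SWAP}\}$ of $U\otimes U$), or verify it directly. In either case, once the twirl is known to land in the span, its coefficients are forced by the two preserved linear functionals, trace and overlap with $P_\omega$. As a fallback that avoids integration, one could instead average over the finite group generated by diagonal phase matrices and permutation matrices $U$, which also fixes $\omega$. That route would require checking that this finite average already projects onto $\mathrm{span}\{I,P_\omega\}$, and I expect that check to be the more delicate one; hence I would use the Haar twirl.
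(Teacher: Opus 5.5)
Your proof is correct. It is the primal counterpart of the paper's dual argument.

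\textbf{Necessity.} The paper exhibits the witness $W_\alpha=I-f_d(\alpha)^{-1}P_\omega\in\mathsf{BP}_\alpha$ and pairs it with $\rho_F$. Your decomposition $\rho_F=\sum_i t_i\psi_i\psi_i^\ast$ with $\sum_i t_i=1$ is the same inequality read on the other side of the duality.

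\textbf{Sufficiency.} The paper twirls an arbitrary $W\in\mathsf{BP}_\alpha$ to $\overline W=aI+bP_\omega$, shows $a+bF\ge 0$, and concludes via $(\mathsf{BP}_\alpha)^\ast=\mathsf K_\alpha$. You instead twirl the two extremal pure states $\psi_0=e_1\otimes e_2$ and $\psi^\star$, then take a convex combination. This avoids the separation/bipolar step entirely. It also yields an explicit certificate: $\rho_F$ is a mixture of twirls of two concrete vectors of $\mathcal V_\alpha$.

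Your version makes explicit a point the paper leaves implicit. To deduce $a\ge 0$ and $a+bf_d(\alpha)\ge 0$ from $\overline W\in\mathsf{BP}_\alpha$, one needs the values $0$ and $f_d(\alpha)$ of $\langle\psi,P_\omega\psi\rangle$ to be attained on $\mathcal V_\alpha$. The bounds $0\le\langle\psi,P_\omega\psi\rangle\le f_d(\alpha)$ that the paper quotes give only the converse implication. Your $\psi_0$ and $\psi^\star$ are exactly the attaining vectors.

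In return, the paper's route describes the twirl-invariant part of the witness cone:
\[
\mathsf{BP}_\alpha\cap\mathrm{span}\{I,P_\omega\}=\{aI+bP_\omega:\ a\ge 0,\ a+bf_d(\alpha)\ge 0\}.
\]
This is the object relevant on the map side, since $C_{\Phi_t}$ lies in this slice.

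Both arguments rest on the same cited ingredient: the range of the twirl is $\mathrm{span}\{I,P_\omega\}$. Both also use closedness of $\mathsf K_\alpha$, you for the Haar-integral limit and the paper for the bipolar identity.
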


\begin{proof}
By Lemma~\ref{lem:mu-alpha-Pomega}, the Hermitian operator
\begin{equation}
\label{eq:Walpha}
W_\alpha \;:=\; I-\frac{1}{f_d(\alpha)}\,P_\omega
\end{equation}
satisfies $\langle\psi,W_\alpha\psi\rangle\ge 0$ for all $\psi\in\mathcal{V}_\alpha$, hence
$W_\alpha\in\mathsf{BP}_\alpha$.
Using $\Tr(P_\omega\rho_F)=F$, we obtain
\begin{equation}
\label{eq:Walpha-rhoF}
\Tr(W_\alpha\rho_F)=1-\frac{F}{f_d(\alpha)}.
\end{equation}
If $F>f_d(\alpha)$ then \eqref{eq:Walpha-rhoF} is negative and hence $\rho_F\notin(\mathsf{BP}_\alpha)^\ast=\mathsf{K}_\alpha$.

Conversely, assume $F\le f_d(\alpha)$ and let $W\in\mathsf{BP}_\alpha$.

Let $\mathcal{T}: \mathbb{M}_{d^2}\to \mathbb{M}_{d^2}$ be the Haar-averaging map
\[
\mathcal{T}(X)\;:=\;\int_{\mathbb{U}(d)} (U\otimes \overline{U})^\ast\,X\,(U\otimes \overline{U})\,\mathrm{d}U,
\]
where $\mathrm{d}U$ denotes the normalized Haar probability measure on $\mathbb{U}(d)$.

Set $\overline{W}:=\mathcal{T}(W)$. Since $\mathcal{V}_\alpha$ is invariant under local unitaries,
we have $\overline{W}\in \mathsf{BP}_\alpha$.

Moreover, $\rho_F\in \mathrm{span}\{I,P_\omega\}$ is $(U\otimes \overline{U})$-invariant, hence
\[
\Tr(W\rho_F)=\Tr(\overline{W}\rho_F).
\]

Finally, it is a standard fact that the image of $\mathcal{T}$ is exactly the two-dimensional space
$\mathrm{span}\{I,P_\omega\}$ (equivalently, the commutant of $\{U\otimes\overline{U}:U\in\mathbb{U}(d)\}$),
so there exist $a,b\in\mathds{R}$ such that
\[
\overline{W}=aI+bP_\omega.
\]
See, e.g., \cite[Example~49]{MeleHaarTutorial}.
Now $\overline{W}\in\mathsf{BP}_\alpha$ means
\[
0\le \langle\psi,\overline{W}\psi\rangle = a + b\,\langle\psi,P_\omega\psi\rangle
\qquad\forall\,\psi\in\mathcal{V}_\alpha.
\]
Since $0\le \langle\psi,P_\omega\psi\rangle\le \mu_\alpha(P_\omega)=f_d(\alpha)$, this is equivalent to
$a\ge 0$ and $a+b f_d(\alpha)\ge 0$ (the minimum of $a+bt$ over $t\in[0,f_d(\alpha)]$ occurs at $t=0$ if $b\ge 0$
and at $t=f_d(\alpha)$ if $b\le 0$).
Therefore,
\[
\Tr(\overline{W}\rho_F)=a+bF\ge 0:
\]
indeed, if $b\ge 0$ then $a\ge 0$ gives $a+bF\ge 0$, while if $b\le 0$ then $F\le f_d(\alpha)$ gives
$a+bF\ge a+b f_d(\alpha)\ge 0$.
Hence $\Tr(W\rho_F)=\Tr(\overline{W}\rho_F)\ge 0$ for all $W\in\mathsf{BP}_\alpha$, so $\rho_F\in(\mathsf{BP}_\alpha)^\ast=\mathsf{K}_\alpha$.
\end{proof}

\medskip
\noindent
As an immediate consequence, one can invert the slice inequalities and express the fractional Schmidt index of
isotropic states directly in terms of the fidelity parameter.  This yields a closed-form ``fractional refinement''
of the classical step-function behavior of the Schmidt number on isotropic states.

 \begin{corollary}
\label{cor:FSN-rhoF}
Let $\rho_F$ be as in \eqref{eq:rhoF-def} and let $f_d(\alpha)$ be given by \eqref{eq:fd-def}--\eqref{eq:fd-explicit}.
Then:
\begin{enumerate}
\item[\textup{(i)}] If $0\le F\le \frac{1}{d}$, then $\FSN(\rho_F)=1$.
\item[\textup{(ii)}] If $\frac{1}{d}<F\le 1$, then $\FSN(\rho_F)$ is the unique $\alpha\in(1,d]$ such that
\begin{equation}
\label{eq:FSN-invert}
F=f_d(\alpha).
\end{equation}
More precisely, if $F\in\big[\frac{k}{d},\frac{k+1}{d}\big]$ with $k\in\{1,\dots,d-1\}$, then
\[
\FSN(\rho_F)=k+\theta(F),
\]
where $\theta(F)\in[0,1]$ is the unique solution of
\begin{equation}
\label{eq:theta-inverse-F-correct}
(Fd-1)\theta^2-2k\theta+k(Fd-k)=0.
\end{equation}
Equivalently, for $F>\frac{k}{d}$ (so that $Fd-1>0$), one may write
\begin{equation}
\label{eq:theta-explicit-F-correct}
\theta(F)=\frac{k-\sqrt{k^2-k(Fd-1)(Fd-k)}}{Fd-1}.
\end{equation}
At the endpoint $F=\frac{k}{d}$ one has $\theta(F)=0$.
\end{enumerate}
\end{corollary}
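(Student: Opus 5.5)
The plan is to combine Theorem~\ref{thm:flagship-slice} with a study of the profile $f_d(\alpha)=\frac{(k+\theta)^2}{d(k+\theta^2)}$ as a function of $\alpha\in[1,d]$. By Theorem~\ref{thm:flagship-slice},
\[
\{\alpha\in[1,d]:\rho_F\in\mathsf K_\alpha\}=\{\alpha: F\le f_d(\alpha)\},
\]
so $\FSN(\rho_F)=\inf\{\alpha: f_d(\alpha)\ge F\}$. Everything therefore reduces to showing that $f_d$ is continuous and strictly increasing on $[1,d]$, with $f_d(1)=1/d$ and $f_d(d)=1$.

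\textbf{Step 1: the profile on each interval.} On each interval $[k,k+1)$, set $g_k(\theta):=(k+\theta)^2/(k+\theta^2)$. A direct computation gives
\[
g_k'(\theta)=\frac{2(k+\theta)\,k(1-\theta)}{(k+\theta^2)^2},
\]
which is strictly positive for $\theta\in[0,1)$. Moreover $g_k(\theta)\to (k+1)^2/(k+1)=k+1$ as $\theta\uparrow1$, and this equals $g_{k+1}(0)$. Hence $f_d$ is continuous at the integers and strictly increasing on all of $[1,d]$, with $f_d(1)=1/d$ and $f_d(d)=1$.

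\textbf{Step 2: reading off the index.} For (i), if $F\le 1/d$ then $\alpha=1$ already satisfies $F\le f_d(1)$, so $\FSN(\rho_F)=1$. For (ii), continuity and strict monotonicity together with the intermediate value theorem give a unique $\alpha^*\in(1,d]$ with $f_d(\alpha^*)=F$. We then have $f_d(\alpha)\ge F$ exactly when $\alpha\ge\alpha^*$, so the infimum equals $\alpha^*$. Since $f_d(k)=k/d$, a value $F\in[k/d,(k+1)/d]$ gives $\alpha^*\in[k,k+1]$. At the right endpoint $F=(k+1)/d$ we get $\theta=1$; this is consistent with the statement's $\theta\in[0,1]$, since $k+1$ equals $k+\theta$ with $\theta=1$.

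\textbf{Step 3: the quadratic for $\theta$.} Clearing denominators in $Fd(k+\theta^2)=(k+\theta)^2$ gives exactly \eqref{eq:theta-inverse-F-correct}. When $F>k/d\ge 1/d$ we have $Fd-1>0$, so the equation is a genuine quadratic. Its two roots are
\[
\frac{k\pm\sqrt{k^2-k(Fd-1)(Fd-k)}}{Fd-1}.
\]
The roots have product $k(Fd-k)/(Fd-1)>0$ and positive sum. Uniqueness in $[0,1]$ follows from the strict monotonicity of $g_k$, so exactly one root lies in $[0,1]$. It remains to show this root is the minus root.

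\textbf{Main obstacle.} The delicate point is excluding the plus root. For $Fd\le k+1$ we have $Fd-1\le k$, so the plus root satisfies
\[
\frac{k+\sqrt{\cdot}}{Fd-1}\ \ge\ \frac{k}{Fd-1}\ \ge\ 1,
\]
with equality only in the degenerate case $F=(k+1)/d$. In that case the discriminant vanishes and both roots coincide at $\theta=1$. The discriminant is nonnegative because a root exists in $[0,1]$ by Step 2. Finally, at $F=k/d$ the quadratic has $\theta=0$ as a root, which matches $\theta(F)=0$.
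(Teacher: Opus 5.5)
Your proposal is correct and follows essentially the same route as the paper. Both arguments reduce $\FSN(\rho_F)$ to $\inf\{\alpha: F\le f_d(\alpha)\}$ via Theorem~\ref{thm:flagship-slice}, invert using continuity and strict monotonicity of $f_d$, and solve $Fd(k+\theta^2)=(k+\theta)^2$. You also supply two details the paper only asserts: the derivative $g_k'(\theta)=2k(k+\theta)(1-\theta)/(k+\theta^2)^2$ that gives monotonicity, and the explicit check that the plus root exceeds $1$ except at $F=(k+1)/d$.
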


\begin{proof}
By Theorem~\ref{thm:flagship-slice}, $\rho_F\in\mathsf{K}_\alpha$ if and only if $F\le f_d(\alpha)$.
Since $\mathsf{K}_\alpha$ is increasing in $\alpha$, the definition \eqref{eq:FSN-def} gives
\[
\FSN(\rho_F)=\inf\{\alpha\in[1,d]:\ F\le f_d(\alpha)\}.
\]

\smallskip
\noindent\emph{Step 1: the regime $0\le F\le 1/d$.}
We have $f_d(1)=1/d$, hence $F\le f_d(1)$ and therefore $\rho_F\in\mathsf{K}_1$.
Since $\FSN(\rho_F)\ge 1$ by definition, it follows that $\FSN(\rho_F)=1$.

\smallskip
\noindent\emph{Step 2: the regime $F>1/d$.}
The function $f_d$ is continuous and strictly increasing on $[1,d]$ (indeed, on each interval
$\alpha\in[k,k+1]$ it is increasing in $\theta=\alpha-k$, with $f_d(k)=k/d$ and $f_d(k+1)=(k+1)/d$).
Hence for each $F\in(1/d,1]$ there is a unique $\alpha\in(1,d]$ such that $F=f_d(\alpha)$.
Moreover, by monotonicity of $f_d$, this unique $\alpha$ equals $\FSN(\rho_F)$.

Now fix $k\in\{1,\dots,d-1\}$ and assume $F\in[\frac{k}{d},\frac{k+1}{d}]$.
Write $\alpha=k+\theta$ with $\theta\in[0,1]$. Using \eqref{eq:fd-explicit}, the equation $F=f_d(\alpha)$ becomes
\[
Fd\,(k+\theta^2)=(k+\theta)^2,
\]
which expands to \eqref{eq:theta-inverse-F-correct}. Solving the quadratic yields \eqref{eq:theta-explicit-F-correct},
and the choice of the minus sign is the one giving $\theta(F)\in[0,1]$.
At $F=\frac{k}{d}$ the equation forces $\theta=0$, giving the endpoint value.
\end{proof}

\medskip

\subsubsection*{Compatibility with the map thresholds (without repetition)}

The same rank-one projection $P_\omega$ also governs the Choi matrices of the canonical ray
\begin{equation}
\label{eq:Phi-t-ray}
\Phi_t(X)=\Tr(X)I_d-tX,
\qquad
C_{\Phi_t}=I_d\otimes I_d-td\,P_\omega.
\end{equation}
Write $\alpha=k+\theta$ with $k=\lfloor\alpha\rfloor$ and $\theta\in[0,1)$ and set
\begin{equation}
\label{eq:tstar-def}
t_\alpha^\ast\;:=\;\frac{k+\theta^2}{(k+\theta)^2}.
\end{equation}
By Theorem~\ref{thm:alpha-threshold-Phi-t}, one has $\Phi_t\in\mathsf{P}_\alpha\iff t\le t_\alpha^\ast$.
Combining this with \eqref{eq:fd-explicit} gives an exact reciprocity on the slice.

\begin{corollary}
\label{cor:reciprocity}
For every $\alpha\in[1,d]$,
\begin{equation}
\label{eq:reciprocity}
f_d(\alpha)=\frac{1}{d\,t_\alpha^\ast}.
\end{equation}
In particular, along the ray $\{\Phi_t\}_{t\in(0,1]}$, the boundary between membership and non-membership in $\mathsf{P}_\alpha$
occurs at $t=1/(d\,f_d(\alpha))$.
\end{corollary}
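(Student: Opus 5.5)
The plan is to combine the two closed-form formulas already established, and then read off the boundary statement from Theorem~\ref{thm:alpha-threshold-Phi-t}. Write $\alpha=k+\theta$ with $k=\lfloor\alpha\rfloor$ and $\theta\in[0,1)$; the case $\alpha=d$ corresponds to $k=d$, $\theta=0$. By Theorem~\ref{thm:flagship-slice}, equivalently Lemma~\ref{lem:mu-alpha-Pomega},
\[
f_d(\alpha)=\mu_\alpha(P_\omega)=\frac{(k+\theta)^2}{d\,(k+\theta^2)}.
\]
By \eqref{eq:tstar-def} we have $t_\alpha^\ast=(k+\theta^2)/(k+\theta)^2$. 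This quantity is strictly positive because $k\ge1$, so its reciprocal makes sense. Then
\[
d\,t_\alpha^\ast\, f_d(\alpha)=d\cdot\frac{k+\theta^2}{(k+\theta)^2}\cdot\frac{(k+\theta)^2}{d\,(k+\theta^2)}=1,
\]
which is \eqref{eq:reciprocity}.

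I would also check the endpoint $\alpha=d$ separately. There Lemma~\ref{lem:mu-alpha-Pomega} gives $f_d(d)=1$, and Case~1 of Theorem~\ref{thm:alpha-threshold-Phi-t} gives threshold $t\le 1/d$. This agrees with the formula $t_d^\ast=d/d^2=1/d$, so no special treatment is needed. A conceptual remark confirms the identity: \eqref{eq:lambda-reduction} states that $\lambda_\alpha(C_{\Phi_t})=1-td\,\mu_\alpha(P_\omega)$. Hence the threshold is intrinsically $t_\alpha^\ast=1/(d\,\mu_\alpha(P_\omega))$, and the reciprocity holds without using the explicit algebraic form at all. I would present this as the main argument, with the explicit formula serving as a consistency check.

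For the final sentence, Theorem~\ref{thm:alpha-threshold-Phi-t} says $\Phi_t\in\mathsf P_\alpha$ if and only if $t\le t_\alpha^\ast=1/(d f_d(\alpha))$. So on the ray the membership region is $(0,t_\alpha^\ast]$ and the non-membership region is $(t_\alpha^\ast,1]$, and the boundary sits at $t=1/(d f_d(\alpha))$. The only point to verify is that this value lies in $(0,1]$. Since $f_d(\alpha)\in[1/d,1]$, the value $1/(d f_d(\alpha))$ lies in $[1/d,1]$, so the boundary genuinely falls on the ray.

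I do not expect any real obstacle. The only care needed is the bookkeeping of the $\alpha=d$ convention and the positivity of $t_\alpha^\ast$.
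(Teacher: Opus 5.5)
Your proposal is correct and follows the paper's own argument: the corollary is obtained by combining the threshold $t^\ast_\alpha$ from Theorem~\ref{thm:alpha-threshold-Phi-t} with the closed form \eqref{eq:fd-explicit}, which is exactly your direct computation $d\,t^\ast_\alpha f_d(\alpha)=1$. Your extra observation that the quantity $m_\alpha$ in \eqref{eq:lambda-reduction} is by definition $\mu_\alpha(P_\omega)$ is valid and explains why the identity holds, though it still relies on Theorem~\ref{thm:alpha-threshold-Phi-t} to identify $t^\ast_\alpha$ as the threshold; your check that $1/(d f_d(\alpha))\in[1/d,1]$ correctly places the boundary on the ray.
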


\begin{corollary}
\label{cor:tau-Phi-t-correct}
Fix $d\ge 2$ and let $\Phi_t:\mathbb{M}_d\to\mathbb{M}_d$ be the map
\[
\Phi_t(X)=\Tr(X)I_d-tX.
\]
Then the stability index $\tau(\Phi_t)$ is given by:
\begin{enumerate}
\item[\textup{(i)}] If $t\le 0$, then $\Phi_t$ is completely positive and $\tau(\Phi_t)=d$.
\item[\textup{(ii)}] If $0<t\le \frac{1}{d}$, then $\Phi_t$ is completely positive and $\tau(\Phi_t)=d$.
\item[\textup{(iii)}] If $\frac{1}{d}<t<1$, then $\tau(\Phi_t)$ is the unique $\alpha\in(1,d)$ such that
\begin{equation}
\label{eq:tau-invert}
t=t_\alpha^\ast,
\qquad\text{where}\qquad
t_\alpha^\ast=\frac{k+\theta^2}{(k+\theta)^2}
\ \ \text{for }\ \alpha=k+\theta,\ k=\lfloor\alpha\rfloor,\ \theta\in[0,1).
\end{equation}
More explicitly, if $t\in\big(\frac{1}{k+1},\frac{1}{k}\big]$ for some $k\in\{1,\dots,d-1\}$, then
\[
\tau(\Phi_t)=k+\theta(t),
\]
where $\theta(t)\in[0,1)$ is the unique solution of
\begin{equation}
\label{eq:theta-inverse-t-correct}
(t-1)\theta^2+2tk\,\theta+(tk^2-k)=0.
\end{equation}
Equivalently, for $t\in\big(\frac{1}{k+1},\frac{1}{k}\big)$ one may write
\begin{equation}
\label{eq:theta-explicit-t-correct}
\theta(t)=\frac{tk-\sqrt{k\big(t(k+1)-1\big)}}{1-t}.
\end{equation}
At the endpoint $t=\frac{1}{k}$ one has $\theta(t)=0$ and hence $\tau(\Phi_{1/k})=k$.
\item[\textup{(iv)}] If $t=1$, then $\tau(\Phi_1)=1$.
\end{enumerate}
\end{corollary}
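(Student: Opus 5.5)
The plan is to reduce everything to Theorem~\ref{thm:alpha-threshold-Phi-t}, which says $\Phi_t\in\mathsf P_\alpha$ if and only if $t\le t_\alpha^\ast$. Then $\tau(\Phi_t)=\sup\{\alpha\in[1,d]: t_\alpha^\ast\ge t\}$, so the corollary becomes a statement about the scalar function $g(\alpha):=t_\alpha^\ast$ on $[1,d]$.

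\emph{Monotonicity of $g$.} I will show that $g$ is continuous and strictly decreasing from $g(1)=1$ to $g(d)=1/d$.
\begin{itemize}
\item On each interval $[k,k+1)$, write $g$ as a function of $\theta$. A direct differentiation gives
\[
\frac{d}{d\theta}\,\frac{k+\theta^2}{(k+\theta)^2}=\frac{2k(\theta-1)}{(k+\theta)^3}<0
\qquad\text{for }\theta\in[0,1).
\]
So $g$ is strictly decreasing on each piece.
\item At the integers, $\lim_{\theta\uparrow1}g=(k+1)/(k+1)^2=1/(k+1)=g(k+1)$. This gives continuity across the breakpoints.
\item Consequently, for $t\in[1/d,1]$ the superlevel set $\{\alpha: g(\alpha)\ge t\}$ is a closed interval $[1,\alpha_0]$ with $g(\alpha_0)=t$. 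Hence $\tau(\Phi_t)=\alpha_0$, and this is also the unique solution of \eqref{eq:tau-invert}.
\end{itemize}
Note also that $g(\alpha)\ge g(d)=1/d\ge t$ whenever $t\le 1/d$.

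\emph{The four cases.}
\begin{itemize}
\item For (i) and (ii): if $t\le1/d$, then $t\le g(\alpha)$ for every $\alpha$, so $\Phi_t\in\mathsf P_d$ and $\tau(\Phi_t)=d$. Complete positivity follows because $C_{\Phi_t}=I-td\,P_\omega\ge0$ exactly when $td\le1$; for $t\le0$ the paper already observed this directly.
\item For (iv): $t=1=g(1)$, so $\Phi_1\in\mathsf P_1$, making $\tau$ well defined. Strict decrease of $g$ gives $g(\alpha)<1$ for $\alpha>1$, hence $\tau(\Phi_1)=1$.
\item For (iii): $1/d<t<1$ forces $\alpha_0\in(1,d)$. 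If $t\in(\frac1{k+1},\frac1k]$, monotonicity places $\alpha_0$ in $[k,k+1)$, so $\alpha_0=k+\theta$ with $\theta\in[0,1)$.
\end{itemize}

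\emph{Solving for $\theta$ in case (iii).} Clearing denominators in $t(k+\theta)^2=k+\theta^2$ gives the quadratic \eqref{eq:theta-inverse-t-correct}. Its discriminant simplifies to
\[
t^2k^2-(t-1)(tk^2-k)=k\bigl(t(k+1)-1\bigr),
\]
which is positive on this range. The quadratic formula, with the sign chosen so that the root lies in $[0,1)$, yields \eqref{eq:theta-explicit-t-correct}. As a check, at $t=1/k$ the square root equals $1=tk$, giving $\theta=0$ and $\tau(\Phi_{1/k})=k$.

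The only delicate point is this sign choice. I will verify it by noting that the other root, $(tk+\sqrt{\cdot})/(1-t)$, exceeds $1$ when $t>1/(k+1)$. Uniqueness of the admissible root then follows from strict monotonicity of $g$ on $[k,k+1)$, independently of the algebra.
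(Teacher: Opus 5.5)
Your proposal is correct and follows essentially the same route as the paper. Both reduce to Theorem~\ref{thm:alpha-threshold-Phi-t}, use that $\alpha\mapsto t_\alpha^\ast$ is continuous and strictly decreasing from $1$ to $1/d$, and solve $t(k+\theta)^2=k+\theta^2$ on the branch lying in $[0,1)$. The difference is that you actually verify the steps the paper only asserts: the derivative sign, continuity at the integers, the discriminant, and the fact that the other root exceeds $1$.
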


\begin{proof}
Recall from Theorem~\ref{thm:alpha-threshold-Phi-t} that for $t\ge 0$ and $\alpha\in[1,d]$,
\[
\Phi_t\in\mathsf{P}_\alpha
\quad\Longleftrightarrow\quad
t\le t_\alpha^\ast,
\qquad
t_\alpha^\ast=\frac{k+\theta^2}{(k+\theta)^2}
\ \text{ for }\alpha=k+\theta.
\]
Also, for $t\le 0$ the Choi matrix satisfies
$C_{\Phi_t}=I\otimes I+|t|d\,P_\omega\geq 0$, hence $\Phi_t$ is completely positive.

\smallskip
\noindent\emph{Step 1: the completely positive regime.}
For $t\le 1/d$ the Choi matrix
$C_{\Phi_t}=I\otimes I-td\,P_\omega$ is positive semidefinite, so $\Phi_t$ is completely positive.
In particular, $\Phi_t\in\mathsf{P}_\alpha$ for every $\alpha\in[1,d]$, hence
\[
\tau(\Phi_t)=\sup\{\alpha\in[1,d]:\Phi_t\in\mathsf{P}_\alpha\}=d,
\]
proving \textup{(i)}--\textup{(ii)}.

\smallskip
\noindent\emph{Step 2: the intermediate regime $1/d<t<1$.}
In this regime $\Phi_t$ is positive but not completely positive.
Since $\alpha\mapsto t_\alpha^\ast$ is continuous and strictly decreasing on $[1,d]$ with
$t_1^\ast=1$ and $t_d^\ast=1/d$, there exists a unique $\alpha\in(1,d)$ such that $t=t_\alpha^\ast$.
Moreover, by the threshold characterization,
\[
\Phi_t\in\mathsf{P}_\beta \iff t\le t_\beta^\ast,
\]
so the set $\{\beta\in[1,d]:\Phi_t\in\mathsf{P}_\beta\}$ is exactly $[1,\alpha]$, and therefore
$\tau(\Phi_t)=\alpha$. This proves \eqref{eq:tau-invert}.

Now suppose $t\in(\frac{1}{k+1},\frac{1}{k}]$ with $k\in\{1,\dots,d-1\}$ and write $\alpha=k+\theta$
with $\theta\in[0,1)$. The equation $t=t_\alpha^\ast$ becomes
\[
t(k+\theta)^2=k+\theta^2,
\]
which expands to \eqref{eq:theta-inverse-t-correct}. Solving the quadratic yields \eqref{eq:theta-explicit-t-correct},
and the chosen branch is the unique one lying in $[0,1)$ on this interval. At $t=1/k$ the equation forces $\theta=0$,
so $\tau(\Phi_{1/k})=k$.

\smallskip
\noindent\emph{Step 3: the endpoint $t=1$.}
Here $\Phi_1$ is positive but not $1+\varepsilon$-positive for any $\varepsilon>0$
(by Theorem~\ref{thm:alpha-threshold-Phi-t}, since $t_{1+\varepsilon}^\ast<1$),
so $\tau(\Phi_1)=1$.
\end{proof}

\begin{remark}
Since $t_{1}^\ast=1$ and $t_{1+\theta}^\ast<1$ for every $\theta>0$, Theorem~\ref{thm:alpha-threshold-Phi-t} implies
$\tau(\Phi_1)=1$ while $\tau(\Phi_t)>1$ for all $t<1$.
Thus $\tau$ detects, on this canonical ray, the distinction between the endpoint $\Phi_1$ (which does not penetrate beyond the
first fractional level) and every interior point $t<1$ (which lies in $\mathsf{P}_{1+\varepsilon}$ for some $\varepsilon>0$).
\end{remark}


\noindent\textit{Data Availability Statement.} Data sharing is not applicable to this article as no datasets were generated or analyzed during the current study.

\medskip

\noindent \textit{Conflict of Interest Statement.}  There is no conflict of interest.
\medskip

\noindent \textit{Ethical Statement.}  Not applicable. This research did not involve human participants, personal data, or animals.

\end{document}